\newtheorem{thm}{Theorem}[section]
\newtheorem{cor}[thm]{Corollary}
\newtheorem{fact}[thm]{Fact}
\newtheorem{lem}[thm]{Lemma}
\theoremstyle{definition}
\newtheorem{defi}[thm]{Definition}
\theoremstyle{remark}
\newtheorem{rmk}[thm]{\bf Remark}
\numberwithin{equation}{section}
\numberwithin{figure}{section}
\def \Col{\textup{Col}}
\def \e{\varepsilon}
\def \ex{\text{ex}}
\def \EX{\text{EX}}
\def \spex{\text{spex}}
\begin{document}
\title[Spectral Tur\'an problems for nondegenerate hypergraphs]
{Spectral Tur\'an problems for nondegenerate hypergraphs}

\author[J. Zheng]{Jian Zheng}
\address{School of  Mathematics and Statistics, Jiangxi Normal University, Nanchang 330022,  China}
\email{zhengj@jxnu.edu.cn}

\author[H. Li]{Honghai Li$^\dag$}
\address{School of  Mathematics and Statistics, Jiangxi Normal University, Nanchang 330022,  China}
\email{lhh@jxnu.edu.cn}
\thanks{$^\dag$The corresponding author. H. Li was supported by National Natural Science Foundation of China (Nos. 12561060, 12161047, 12061038)}

\author[Y.-Z. Fan]{Yi-Zheng Fan$^\ddag$}
\address{School of Mathematical Sciences, Anhui University, Hefei 230601, China}
\email{fanyz@ahu.edu.cn}
\thanks{$^\ddag$Y.-Z. Fan was supported by National Natural Science Foundation of China (Nos. 12331012, 12471320).}

\keywords{Nondegenerate hypergraph; Tur\'an pair; $\alpha$-spectral radius; degree-stability; spectral stability}

\begin{abstract}
Keevash, Lenz and Mubayi developed a general criterion for hypergraph spectral extremal problems in their seminal work (SIAM J. Discrete Math., 2014). Their framework shows that extremal results on the $\alpha$-spectral radius (for $\alpha > 1$) may be deduced from a corresponding hypergraph Tur\'an problem exhibiting stability properties, provided its extremal construction satisfies certain continuity assumptions.
In this paper, we establish a spectral stability result for nondegenerate hypergraphs, extending the Keevash--Lenz--Mubayi criterion. Applying this result, we derive two general spectral Tur\'an theorems for hypergraphs with bipartite or multipartite pattern, thereby transforming  spectral Tur\'an problems into the corresponding purely combinatorial problems related to degree-stability in  nondegenerate $k$-graph families.
As  applications, we determine the maximum $\alpha$-spectral radius for several classes of hypergraph and characterize the corresponding extremal hypergraphs, such as the expansion of complete graphs, the generalized fans, the cancellative hypergraphs, the generalized triangles, and a special book hypergraph.
\end{abstract}

\maketitle

\section{Introduction}

A \emph{hypergraph} $H=(V(H),E(H))$ consists of a vertex set $V(H)=\{v_1,v_2,{\cdots},v_n\}$  and an edge set $E(H)=\{e_1,e_2,{\cdots},e_m\}$ , where $e_i \subseteq V$ for $i \in [m]:=\{1,2,\ldots,m\}$. If $|e_i|=k$ for each $i \in [m]$ and $k \geq2$, then $H$ is called a \emph{$k$-uniform} hypergraph (or simply \emph{$k$-graph}).
 A simple graph is exactly a $2$-uniform hypergraph. We denote by $e(H)$ the number of edges of $H$, that is, $e(H)=|E(H)|$.
 A $k$-graph $H'=(V(H'), E(H'))$ is called a \emph{sub-hypergraph} of  $H$ if $V(H')\subseteq V(H)$ and $E(H')\subseteq E(H)$.
 For $S\subseteq V(H)$,  the $k$-graph with $S$ as its vertex set and $E(H)\cap \binom{S}{k}$ as its edge set is called an \emph{induced sub-hypergraph} of $H$, denoted by $H[S]$. 

Let $\mathcal{F}$ be a family of $k$-graphs.
We say that a hypergraph $H$ is \emph{$\mathcal{F}$-free} if $H$ does not
contain any member of  $\mathcal{F}$ as a sub-hypergraph.
The \emph{Tur\'an number} $\ex(n,\mathcal{F})$ is defined as the maximum number of edges of an $\mathcal{F}$-free $k$-graph on  $n$ vertices.
Denote by $\EX(n,\mathcal{F})$ the set of all $\mathcal{F}$-free $k$-graphs with $ex(n,\mathcal{F})$ edges and $n$ vertices.
Determining the exact Tur\'an number for a general $k$-graph is a classic and intractable problem in extremal combinatorics, but if we are satisfied with the asymptotic results,  the  simple graph is completely solved (see \cite{ES1966}).
The  \emph{Tur\'an density} of $\mathcal{F}$ is defined as $$\pi(\mathcal{F})=\lim\limits_{n\to \infty}\frac{\ex(n,\mathcal{F})}{\binom{n}{k}},$$ and  $\mathcal{F}$ is called \emph{nondegenerate} if $\pi(\mathcal{F})>0$.
So, finding an asymptotic result for $\ex(n,\mathcal{F})$ is equivalent to determining the Tur\'an density if $\mathcal{F}$ is nondegenerate.

The Tur\'an problems are closely related to the phenomenon of stability, and many Tur\'an problems can be solved by the stability results of the corresponding graphs or hypergraphs. The first stability theorem was proved independently by Erd\H{o}s and Simonovits \cite{S1968}. In addition,  Simonovits \cite{S1968}  determined $\ex(n,F)$ exactly according to the stability theorem for a color-critical graph $F$.
At present, there is much  research on the stability of hypergraphs; for details see \cite{BIJ2017,FS2005,KS2005,MO2006,O2013,SL2018}.
In \cite{LRM2023}, Liu, Mubayi, and Reiher provided a unified framework for the stability of certain hypergraph families, which simplifies the proofs of many known results.

The spectral Tur\'an problem of graphs or hypergraphs is a spectral version of the Tur\'an problem.
Nikiforov made important contributions to the spectral Tur\'an problems of simple graphs.
For example,  Nikiforov \cite{N2007} determined the maximum spectral  radius for the $K_{l+1}$-free graph on $n$ vertices, and showed that the Tur\'an graph $T_{l}(n)$ is the unique spectral extremal graph, which is a generalization of Tur\'an theorem.
To date there are very few results on spectral Tur\'an problems of hypergraph.
In \cite{KLM2014}, Keevash, Lenz, and Mubayi gave two general criteria that formalize a generalized form of the strong stability of Tur\'an problems.
They also determined the maximum $\alpha$-spectral radius of any $3$-graph on $n$ vertices not containing the Fano plane when $n$ is sufficiently large.
In \cite{ELW2022}, Ellingham, Lu, and Wang characterized the extremal hypergraph with maximum spectral radius among all outerplanar $3$-graphs of $n$ vertices by its shadow.
In \cite{NLK2022}, Ni, Liu and Kang obtained the maximum $\alpha$-spectral radius of  cancellative $3$-graphs, and characterized the extremal hypergraph.
Hou, Liu, and Zhao \cite{HLZ2024} gave a result on spectral Tur\'an problems for some hypergraphs which has degree-stability.
Recently, the Tur\'an and spectral Tur\'an problems of linear hypergraphs have also been extensively studied; see \cite{FG2020,AC2021,GCH2022,HCC2021,SFK2025,SF2022}.

 In this paper, we investigate the spectral Tur\'an problems for  nondegenerate hypergraphs.  We show that 
for any family $\mathcal{F}$ of nondegenerate  $k$-graphs, under a certain growth condition on $\alpha$-spectral radius, 
the spectral Tur\'an  problems can be effectively reduced to the spectral extremal problems restricted to the class of $\mathcal{F}$-free hypergraphs with large  minimum degree. Furthermore, we give two general results for hypergraphs with bipartite or multipartite pattern, which  transform the corresponding spectral Tur\'an problems into purely combinatorial problems involving the degree-stability of  nondegenerate $k$-graph families; see Section $3$.
As an application,  we determine the maximum $\alpha$-spectral radius for some classes of hypergraphs and characterize the corresponding extremal hypergraphs, such as the expansion of complete graphs, the generalized fans, the cancellative hypergraphs, the generalized triangles, and a special book hypergraph; see Section $4$.

\section{Preliminaries}
\subsection{Stability}
Let $l$, $k$ be positive integers such that $l\geq k\geq2$.
A  $k$-graph is called \emph{$l$-partite} if its vertex set can be divided into $l$ parts, so that each edge contains at most one vertex from each part.
An edge maximal $l$-partite $k$-graph is called a \emph{complete $l$-partite $k$-graph}.
Let $T^{k}_{l}(n)$ be the balanced complete $l$-partite $k$-graph on $n$ vertices, namely, any two parts have sizes differing  by at most $1$.
Therefore, the number of edges in $T^{k}_{l}(n)$ is
\begin{equation*}
t^{k}_{l}(n):=\sum_{S\in \binom{[l]}{k}}\prod_{i\in S}n_{i},
\end{equation*}
where $n_{i}=\lfloor(n+i-1)/l\rfloor$ for $i\in [l]$.

Although $t^{k}_{l}(n)$ has an explicit expression, the following asymptotic result is more useful later in our estimation.

\begin{lem}\label{t}
Let $l\geq k\geq2$. Then 
$$t^{k}_{l}(n)=\frac{(l)_{k}}{k!l^{k}}n^{k}+O(n^{k-2}),$$ 
where $(l)_{k}:=l(l-1)\cdots (l-k+1)$.
\end{lem}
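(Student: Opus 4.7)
The plan is a direct asymptotic expansion of the product defining $t^k_l(n)$. First I would set $\varepsilon_i := n_i - n/l$, so that the definition $n_i = \lfloor (n+i-1)/l\rfloor$ gives $|\varepsilon_i| < 1$ uniformly in $n$. The crucial extra ingredient is the identity $\sum_{i=1}^{l}\varepsilon_i = 0$, which follows immediately from $\sum_{i=1}^{l} n_i = n$; this will be precisely what kills the $n^{k-1}$ term in the expansion below.

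Next, for each $S \in \binom{[l]}{k}$ I would expand
\[
\prod_{i\in S} n_i \;=\; \prod_{i\in S}\Bigl(\frac{n}{l} + \varepsilon_i\Bigr) \;=\; \Bigl(\frac{n}{l}\Bigr)^{k} + \Bigl(\frac{n}{l}\Bigr)^{k-1}\sum_{i\in S}\varepsilon_i + O(n^{k-2}),
\]
where the error absorbs every monomial involving at least two $\varepsilon$-factors, each of which contributes $O(n^{k-2})$ with an implicit constant depending only on $l$ and $k$.

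Finally I would sum over $S \in \binom{[l]}{k}$. The leading pieces give $\binom{l}{k}(n/l)^k = \frac{(l)_k}{k!\,l^k}\,n^k$, matching the claimed main term. For the $n^{k-1}$ pieces, a simple double-counting shows
\[
\sum_{S\in\binom{[l]}{k}}\sum_{i\in S}\varepsilon_i \;=\; \binom{l-1}{k-1}\sum_{i=1}^{l}\varepsilon_i \;=\; 0,
\]
since each index $i$ lies in exactly $\binom{l-1}{k-1}$ of the $k$-subsets. Combining these contributions yields the stated estimate.

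I do not expect a serious obstacle here: the whole argument is a one-line expansion once the key identity $\sum_i\varepsilon_i = 0$ is noticed. The only point requiring genuine attention is this cancellation; without it one would only obtain an error of $O(n^{k-1})$, so the balanced nature of the partition is essential to the $O(n^{k-2})$ bound.
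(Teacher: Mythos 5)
Your proof is correct and takes a genuinely different (and cleaner) route than the paper's. The paper writes $n = lq + s$, groups the parts by whether they have size $q$ or $q+1$, and expresses $t^k_l(n)$ as $\sum_{i=0}^k \binom{s}{i}\binom{l-s}{k-i}(q+1)^i q^{k-i}$; the cancellation of the $n^{k-1}$ term then comes out of Vandermonde-type binomial identities (essentially $k\binom{l}{k}=l\binom{l-1}{k-1}$) applied after substituting $q=(n-s)/l$. You instead write $n_i = n/l + \varepsilon_i$ with $|\varepsilon_i|<1$, expand each $\prod_{i\in S} n_i$ around $(n/l)^k$, and obtain the cancellation from $\sum_i \varepsilon_i = 0$ (i.e.\ $\sum_i n_i = n$) together with the double-count $\sum_{S}\sum_{i\in S}\varepsilon_i = \binom{l-1}{k-1}\sum_i\varepsilon_i$. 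This isolates the real mechanism (balancedness of the parts) more transparently and avoids the bookkeeping with $s$ and the binomial identities; the paper's route, on the other hand, is closer in spirit to how one would get an exact formula rather than an asymptotic one. Both hinge on the same fact, namely $\sum_i n_i = n$, so they buy essentially the same thing; yours is just a shorter way to see why the $n^{k-1}$ coefficient vanishes.
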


\begin{proof}
Let $n=lq+s$, where $0\leq s<l$. Then
\begin{displaymath}
\begin{split}
t^{k}_{l}(n)
=& \sum^{k}_{i=0}\binom{s}{i} \binom{l-s}{k-i}(q+1)^{i}q^{k-i}\\
=& \sum^{k}_{i=0}\binom{s}{i} \binom{l-s}{k-i}(q^{k}+iq^{k-1}+O(n^{k-2}))\\
=& \sum^{k}_{i=0}\binom{s}{i} \binom{l-s}{k-i}\big(\frac{n}{l}-\frac{s}{l}\big)^{k}+\sum^{k}_{i=0}i \binom{s}{i} \binom{l-s}{k-i}\big(\frac{n}{l}-\frac{s}{l})^{k-1}+O(n^{k-2}\big).
\end{split}
\end{displaymath}
So we have
\begin{displaymath}
\begin{split}
t^{k}_{l}(n)
=& \binom{l}{k} \big(\frac{n}{l}-\frac{s}{l}\big)^{k}+\sum^{k}_{i=1}s \binom{s-1}{i-1} \binom{l-s}{k-i}\big(\frac{n}{l}-\frac{s}{l})^{k-1}+O(n^{k-2}\big)\\
=& \frac{(l)_{k}}{k!l^{k}}n^{k}-\frac{ks}{l^{k}}\binom{l}{k}n^{k-1}+s \sum^{k-1}_{i=0}\binom{s-1}{i} \binom{l-s}{k-1-i}\big(\frac{n}{l})^{k-1}+O(n^{k-2}\big)\\
=& \frac{(l)_{k}}{k!l^{k}}n^{k}-\frac{ks}{l^{k}}\binom{l}{k}n^{k-1}+\frac{s}{l^{k-1}}\binom{l-1}{k-1}n^{k-1}+O(n^{k-2})\\
=& \frac{(l)_{k}}{k!l^{k}}n^{k}+O(n^{k-2}).
\end{split}
\end{displaymath}
\end{proof}

 A \emph{$k$-multiset} is a  collection of $k$ elements with repetitions allowed.
 A \emph{$k$-pattern} is a pair $P=([l],E)$ where $l$ is a positive integer and $E$ is a collection of $k$-multisets with elements from $[l]$.
 Clearly, $k$-patterns are generalizations of $k$-graphs.
 Given a $k$-graph $H$ and $k$-pattern $P=([l],E)$, a map $\phi$: $V(H)\rightarrow [l]$ is a \emph{homomorphism} from $H$ to $P$ if the $k$-multiset $\{\phi(v_{1}),\ldots,\phi(v_{k})\}$ belongs to $E$ for every edge $\{v_{1},\ldots,v_{k}\}\in E(H)$.
 We say $H$ is \emph{$P$-colorable} if there is a homomorphism from $H$ to $P$.
 For example, any $l$-partite $k$-graph is $K^{k}_{l}$-colorable, where $K^{k}_{l}$ is the complete $k$-graph on $l$ vertices; conversely, every $K^{k}_{l}$-colorable $k$-graph is $l$-partite.
 Let $\mathcal{F}$ be a family of $k$-graphs and $P$ be a $k$-pattern.
 We say $(\mathcal{F},P)$ is a \emph{Tur\'an pair} if every $P$-colorable $k$-graph is $\mathcal{F}$-free and every edge maximum $\mathcal{F}$-free $k$-graph is $P$-colorable.

For a $k$-graph $H$ and a vertex $v\in V(H)$,  we denote by $E_{H}(v)$  the set of edges in $H$ containing the vertex $v$.
The degree $d_{H}(v)$ of $v$ is defined as the cardinality $|E_{H}(v)|$. Let $\delta(H)$ denote the minimum degree of $H$. For simplicity,
we write $H-v$ for the induced sub-hypergraph $H[V(H)\backslash\{v\}]$.

\begin{defi}[\hspace{1sp}\cite{HLZ2024}]
Let $\mathcal{F}$ be a family of nondegenerate $k$-graphs, where $k\geq 2$,  and let $\mathfrak{H}$ be a family of $\mathcal{F}$-free $k$-graphs.

\begin{itemize}
\item[(1)] $\mathcal{F}$ is \emph{edge-stable} with respect  to $\mathfrak{H}$ if for every $\delta>0$ there exist $\e>0$ and  $n_{0}$ such that  every $\mathcal{F}$-free $k$-graph $\mathcal{H}$ on $n\geq n_{0}$ vertices with $e(\mathcal{H})\geq(\pi(\mathcal{F})/k!-\e)n^{k}$  becomes a member of $\mathfrak{H}$ after removing at most $\delta n^{k}$ edges.

\item[(2)] $\mathcal{F}$ is \emph{degree-stable} with respect  to $\mathfrak{H}$  if  there exist $\e>0$ and $n_{0}$ such that every $\mathcal{F}$ free $k$ -graph $\mathcal{H}$ on the $n\geq n_{0}$ vertices with $\delta(\mathcal{H})\geq (\pi(\mathcal{F})/(k-1)!-\e)n^{k-1}$ is a member of $\mathfrak{H}$.

\item[(3)] $\mathcal{F}$ is \emph{vertex-extendable} with respect  to $\mathfrak{H}$  if  there exist  $\e>0$ and $n_{0}$ such that every $\mathcal{F}$ free $k$-graph $\mathcal{H}$ on $n\geq n_{0}$ vertices with $\delta(\mathcal{H})\geq (\pi(\mathcal{F})/(k-1)!-\e)n^{k-1}$  satisfies: if $\mathcal{H}-v$ is a member of $\mathfrak{H}$ for some vertex $v$, then $\mathcal{H}$ is a member of $\mathfrak{H}$ as well.
\end{itemize}
\end{defi}

 A class $\mathfrak{H}$ of $k$-graphs is called \emph{hereditary} if it is closed under taking induced sub-hypergraphs, that is, for every  $G \in \mathfrak{H}$ and $S\subseteq V(G)$, we have $G[S]\in\mathfrak{H}$.
 Note that the collection of all $P$-colorable hypergraphs is hereditary.
 In many cases, the extremal hypergraphs of Tur\'an problems are $P$-colorable for some pattern $P$, so we usually choose $\mathfrak{H}$ as the collection of all $P$-colorable hypergraphs.
 For further developments on the hereditary property of hypergraphs, see \cite{N2014A, N2014S}.

\begin{thm}[\hspace{1sp}\cite{HLZ2024}]\label{edv}
Let $\mathcal{F}$ be a family of nondegenerate $k$-graphs and $\mathfrak{H}$ be a hereditary class of $\mathcal{F}$-free $k$-graphs.
If $\mathcal{F}$ is both edge-stable and vertex-extendable with respect to $\mathfrak{H}$, then $\mathcal{F}$ is degree-stable with respect to $\mathfrak{H}$.
\end{thm}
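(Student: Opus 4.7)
My plan is to combine edge-stability, the hereditary property of $\mathfrak{H}$, and iterated vertex-extendability. First, I would fix the parameters: let $\e_v>0$ and $n_v$ be the constants guaranteed by vertex-extendability, choose $\delta>0$ small (to be specified), and let $\e_e,n_e$ be the edge-stability constants for this $\delta$. Set $\e=\min\{k\e_e,\e_v/2\}$ (subject to further refinement) and $n_0=\max\{n_e,n_v,1/\delta\}$. For any $\mathcal{F}$-free $k$-graph $\mathcal{H}$ on $n\geq n_0$ vertices with $\delta(\mathcal{H})\geq(\pi(\mathcal{F})/(k-1)!-\e)n^{k-1}$, the degree sum yields $e(\mathcal{H})\geq n\delta(\mathcal{H})/k\geq(\pi(\mathcal{F})/k!-\e_e)n^k$, so edge-stability furnishes $\mathcal{H}^*\in\mathfrak{H}$ with $\mathcal{H}^*\subseteq\mathcal{H}$ and $|E^-|:=|E(\mathcal{H})\setminus E(\mathcal{H}^*)|\leq\delta n^k$.

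The heart of the proof is to find $B\subseteq V(\mathcal{H})$ with $|B|\leq\delta''n$ (for some small $\delta''$ tending to $0$ with $\delta$) such that $V(\mathcal{H})\setminus B$ is independent in the auxiliary $k$-graph $(V(\mathcal{H}),E^-)$. Then by the hereditary property of $\mathfrak{H}$, $\mathcal{H}^*[V(\mathcal{H})\setminus B]\in\mathfrak{H}$, and since $V(\mathcal{H})\setminus B$ contains no edge of $E^-$, $\mathcal{H}[V(\mathcal{H})\setminus B]=\mathcal{H}^*[V(\mathcal{H})\setminus B]\in\mathfrak{H}$. I would construct such $B$ via a threshold and iterative removal of vertices of highest $d^-$-degree (where $d^-(v)$ counts edges of $E^-$ incident to $v$), exploiting the identity $\sum_v d^-(v)=k|E^-|\leq k\delta n^k$ to bound the number of removals.

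Given such $B=\{v_1,\ldots,v_m\}$, I would then iteratively apply vertex-extendability by adding vertices back. At step $i$, let $\mathcal{H}_i=\mathcal{H}[V(\mathcal{H})\setminus\{v_{i+1},\ldots,v_m\}]$; by induction $\mathcal{H}_{i-1}\in\mathfrak{H}$ and $\mathcal{H}_i-v_i=\mathcal{H}_{i-1}$. The minimum-degree loss from restricting $\mathcal{H}$ to $\mathcal{H}_i$ is bounded by $|B|\binom{n-2}{k-2}\leq\delta''n^{k-1}/(k-2)!$, so $\delta(\mathcal{H}_i)\geq(\pi(\mathcal{F})/(k-1)!-\e-\delta''/(k-2)!)n^{k-1}$, which exceeds the vertex-extendability threshold $(\pi(\mathcal{F})/(k-1)!-\e_v)(n-m+i)^{k-1}$ provided $\e+\delta''/(k-2)!<\e_v$. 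Vertex-extendability then gives $\mathcal{H}_i\in\mathfrak{H}$, and iterating to $i=m$ yields $\mathcal{H}=\mathcal{H}_m\in\mathfrak{H}$.

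The main obstacle is establishing the vertex-cover-type bound $|B|\leq\delta''n$ with $\delta''\to 0$ as $\delta\to 0$. A generic $k$-graph with $\delta n^k$ edges may in the worst case have vertex-cover number of order $\Theta(n)$, so the proof must exploit the structure of $E^-$ arising from $\mathcal{H}$'s high minimum degree to show that the bad edges are concentrated on a sublinear set of vertices. A careful threshold argument, possibly combined with iterated applications of edge-stability on residual induced sub-hypergraphs, should suffice; this is where the bulk of the technical effort lies.
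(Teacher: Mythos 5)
Your step (2)---finding $B$ with $|B|\leq\delta''n$ that covers every edge of $E^-$---is a genuine gap, and it is not the right target. You are correct to worry: a $k$-graph with $\delta n^k$ edges can easily have vertex-cover number $(1-o(1))n$ (distribute the bad edges "regularly" so every linear-size subset still spans some), and the extra structure (the bad edges lie in $\mathcal{H}$, which has high minimum degree) gives you no obvious handle on it. "Iterated edge-stability" does not rescue this: if you remove the $O(\sqrt{\delta}n)$ vertices of highest $E^-$-degree and recount, the remaining bad-edge density is $O(\sqrt{\delta})$, which is \emph{larger} than $\delta$, so the iteration goes the wrong way. No variant of "clean until $V\setminus B$ is $E^-$-independent" can work for a fixed $\delta>0$.

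The resolution is to stop trying to reach $\mathcal{H}[V\setminus B]=G[V\setminus B]$ and instead let $B$ be simply the set of vertices with low $G$-degree (so $|B|\leq k\sqrt{\delta}n$ by double-counting $\sum_v\bigl(d_{\mathcal{H}}(v)-d_G(v)\bigr)=k|E^-|$); then $G':=G[V\setminus B]\in\mathfrak{H}$ has minimum degree above the vertex-extendability threshold, but $\mathcal{H}[V\setminus B]$ still contains bad edges. The key idea you are missing is to \emph{add those bad edges back one at a time} and apply vertex-extendability to the intermediate graphs. Enumerate the bad edges inside $V\setminus B$ as $e_1,\dots,e_m$ and set $G'_i:=G'+e_1+\cdots+e_i$. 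Each $G'_i$ is $\mathcal{F}$-free (it is a subgraph of $\mathcal{H}$) and has $\delta(G'_i)\geq\delta(G')$ above the threshold. For any $v\in e_i$ one has $G'_i-v=G'_{i-1}-v$ (removing $v$ kills the newest edge $e_i$ along with everything else through $v$), and $G'_{i-1}-v\in\mathfrak{H}$ by the inductive hypothesis together with the hereditary property. Vertex-extendability then gives $G'_i\in\mathfrak{H}$, and by induction $\mathcal{H}[V\setminus B]=G'_m\in\mathfrak{H}$, with no vertex-cover bound needed at all. Only \emph{then} does one add back the vertices of $B$ one at a time exactly as in your step (3), which is fine as written. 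So the final phase of your argument is correct, but the central mechanism---using vertex-extendability to absorb edges, not just vertices, by choosing $v$ inside the newest edge---is absent, and the vertex-cover route you propose in its place cannot be made to work.
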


\subsection{Spectral radius}
Let $G$ be a $k$-graph on $n$ vertices. For any $\alpha>1$, the \emph{Lagrangian polynomial} $P_{G}(\mathbf{x})$ of $G$ is defined
 as $$P_{G}(\mathbf{x})=k!\sum_{\{i_{1},\ldots,i_{k}\}\in E(G)}x_{i_{1}}\cdots x_{i_{k}}.$$
 Let $\mathbf{x}=(x_{1},\ldots,x_{n})\in \mathbb{R}^{n}$.
 For $\alpha \ge 1$, the $\ell_\alpha$-norm of $\mathbf{x}$ is denoted and defined by 
  $\|\mathbf{x}\|_{\alpha}:=(|x_{1}|^{\alpha}+\cdots +|x_{n}|^{\alpha})^{1/\alpha}$.
The \emph{$\alpha$-spectral radius} of $G$ is defined as
$$\lambda_{\alpha}(G)=\max_{\|\mathbf{x}\|_{\alpha}=1}P_{G}(\mathbf{x}).$$
If taking $\mathbf{x}$ be an all-one vector, then 
\begin{equation}\label{ave}
    \lambda_\alpha(G) \ge k! n^{-k/\alpha} e(G).
\end{equation}
If $\mathbf{x}\in \mathbb{R}^{n}$ is a unit vector with respect to $\ell_\alpha$-norm such that $\lambda_{\alpha}(G)=P_{G}(\mathbf{x})$, then $\mathbf{x}$ is called an \emph{eigenvector} of $G$ corresponding to $\lambda_{\alpha}(G)$.
 Obviously, any $k$-graph $G$  always has a nonnegative eigenvector corresponding to $\lambda_{\alpha}(G)$.
When $\alpha>1$, the nonnegative eigenvector $\mathbf{x}=(x_{1},\ldots,x_{n})$ of a
 $k$-graph $G$ satisfies the following equations derived from Lagrange's method:
\begin{equation*}
\lambda_{\alpha}(G)x_{i}^{\alpha-1}=(k-1)!\sum_{\{i,i_{2},\ldots,i_{k}\}\in E(G)}x_{i_{2}}\cdots x_{i_{k}},~ \mbox{for}~
1\leq i\leq n.
\end{equation*}

The \emph{spectral Tur\'an number}, denoted by $\spex(n,\mathcal{F})$, is defined as the maximum $\alpha$-spectral radius over all $\mathcal{F}$-free $k$-graphs on $n$ vertices. The \emph{spectral Tur\'an density}  can be analogously defined  by 
$$\lambda^{(\alpha)}(\mathcal{F}):=\lim\limits_{n\to \infty}\frac{\spex(n,\mathcal{F})}{n^{k-k/\alpha}}.$$

Note that any family of $\mathcal{F}$-free $k$-graphs is hereditary. The result of Nikiforov \cite[Theorem $9.3$]{N2014A} (see also \cite[Theorem 12]{N2014S}) implies the following:

 \begin{lem}[\hspace{1sp}\cite{N2014A,N2014S}]\label{sed}
Let $\mathcal{F}$ be a family of $k$-graphs. Then for every $\alpha>1$,
$$\lambda^{(\alpha)}(\mathcal{F})=\pi(\mathcal{F}).$$
\end{lem}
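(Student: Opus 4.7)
The plan is to establish the two inequalities $\lambda^{(\alpha)}(\mathcal{F})\ge \pi(\mathcal{F})$ and $\lambda^{(\alpha)}(\mathcal{F})\le \pi(\mathcal{F})$ separately.

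For the lower bound, I would apply inequality~\eqref{ave} to an extremal construction. For each $n$, pick $G_n\in\EX(n,\mathcal{F})$, so $e(G_n)=\ex(n,\mathcal{F})$. Since $G_n$ is $\mathcal{F}$-free, $\spex(n,\mathcal{F})\ge\lambda_\alpha(G_n)\ge k!\,n^{-k/\alpha}\ex(n,\mathcal{F})$. Dividing by $n^{k-k/\alpha}$ gives
\[
\frac{\spex(n,\mathcal{F})}{n^{k-k/\alpha}} \;\ge\; \frac{\ex(n,\mathcal{F})}{\binom{n}{k}}\cdot\frac{k!\binom{n}{k}}{n^k}.
\]
Since $k!\binom{n}{k}/n^k\to 1$ and $\ex(n,\mathcal{F})/\binom{n}{k}\to\pi(\mathcal{F})$, passing $n\to\infty$ yields $\lambda^{(\alpha)}(\mathcal{F})\ge\pi(\mathcal{F})$.

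For the upper bound I would appeal to Nikiforov's theorem cited immediately before the lemma, after noting that the family of $\mathcal{F}$-free $k$-graphs is hereditary: if $G$ contains no member of $\mathcal{F}$, neither does any induced subhypergraph $G[S]$. Nikiforov's result states precisely that for every hereditary property $\mathcal{P}$ and every $\alpha>1$, the spectral Tur\'an density of $\mathcal{P}$ coincides with its edge Tur\'an density, which gives $\lambda^{(\alpha)}(\mathcal{F})\le\pi(\mathcal{F})$.

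If one preferred to reprove the upper bound from first principles rather than quote Nikiforov, the natural route is a blow-up/symmetrization argument. Given an $\mathcal{F}$-free $G$ on $n$ vertices realizing $\spex(n,\mathcal{F})$ with nonnegative eigenvector $\mathbf{x}$ normalized by $\|\mathbf{x}\|_\alpha=1$, I would set $p_i:=x_i^\alpha$ (so $\sum p_i=1$) and consider the $N$-vertex blow-up of $G$ obtained by replacing vertex $i$ with $n_i\approx Np_i$ copies. A supersaturation argument, available because $\mathcal{F}$ is effectively nondegenerate in the hereditary sense, forces the blow-up to have edge density at most $\pi(\mathcal{F})+o(1)$; a power-mean/H\"older comparison then translates this back into a bound of $(\pi(\mathcal{F})+o(1))n^{k-k/\alpha}$ on $P_G(\mathbf{x})$. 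This direction is the main obstacle: blow-ups of $\mathcal{F}$-free hypergraphs need not themselves be $\mathcal{F}$-free, so supersaturation must be applied only asymptotically, and the exponent $k-k/\alpha$ arises precisely from converting the $\ell_\alpha$ normalization into the probability normalization of the blow-up weights. Quoting Nikiforov's hereditary-family theorem is the clean way to sidestep these technicalities.
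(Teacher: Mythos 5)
Your proposal takes essentially the same route as the paper: the paper gives no proof of its own, simply noting that Nikiforov's Theorem 9.3 of \cite{N2014A} (or Theorem 12 of \cite{N2014S}) implies the statement, and your upper bound rests on exactly that citation. Your explicit lower-bound argument via inequality~\eqref{ave} applied to an extremal construction is correct and a reasonable thing to spell out (strictly speaking it bounds $\liminf$, whereas Nikiforov's theorem is what guarantees the limit exists, so the citation is doing the real work in both directions); the blow-up/symmetrization sketch is a fair description of how Nikiforov's result is proved, but you rightly treat it as an aside rather than a self-contained proof.
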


\begin{lem}[\hspace{1sp}\cite{KNY2014}]\label{aut}
Let $G$ be a  $k$-graph of order $n$ with at least one edge, and let $u$ and $v$ be vertices of $G$ such that the transposition of $u$ and $v$ is an automorphism of $G$.
If $\alpha>1$, and $\mathbf{x}$ is an eigenvector corresponding to $\lambda_{\alpha}(G)$, then $x_{u}=x_{v}$.
\end{lem}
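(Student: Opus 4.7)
The plan is a symmetrization argument. Starting from an eigenvector $\mathbf{x}$ with $\|\mathbf{x}\|_\alpha=1$ and $P_G(\mathbf{x})=\lambda_\alpha(G)$, I would replace its $u$- and $v$-entries by their average; the resulting vector $\mathbf{y}$ should satisfy $P_G(\mathbf{y})\ge P_G(\mathbf{x})$, while for $\alpha>1$ strict convexity of $t\mapsto t^\alpha$ forces $\|\mathbf{y}\|_\alpha<1$ whenever $x_u\ne x_v$. Combining these with the $k$-homogeneity of $P_G$ then produces a competitor $\mathbf{y}/\|\mathbf{y}\|_\alpha$ violating maximality.

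First I would reduce to the nonnegative case: since $P_G(|\mathbf{x}|)\ge P_G(\mathbf{x})$ and $\| |\mathbf{x}| \|_\alpha=\|\mathbf{x}\|_\alpha$, the vector $|\mathbf{x}|$ is also an eigenvector, so WLOG $\mathbf{x}\ge 0$. Suppose for contradiction that $x_u\ne x_v$, and set $y_u=y_v=(x_u+x_v)/2$, $y_w=x_w$ for $w\notin\{u,v\}$. Partition $E(G)=E_0\sqcup E_1\sqcup E_2$ according to whether an edge meets $\{u,v\}$ in $0$, $1$, or $2$ vertices. Because the transposition $\sigma=(u\,v)$ is an automorphism of $G$ that fixes every other vertex, the edges of $E_1$ pair up as $\{e,\sigma(e)\}$, with each pair contributing $(x_u+x_v)\prod_{w\in e\setminus\{u\}}x_w$ to $P_G(\mathbf{x})/k!$; this is unchanged after passing to $\mathbf{y}$ since $y_u+y_v=x_u+x_v$. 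Edges in $E_0$ are unaffected, while each edge in $E_2$ contributes $x_ux_v\prod_{w\in e\setminus\{u,v\}}x_w$, which becomes $((x_u+x_v)/2)^{2}\prod_{w\in e\setminus\{u,v\}}x_w$ under $\mathbf{y}$ and is at least as large by AM--GM. Consequently $P_G(\mathbf{y})\ge P_G(\mathbf{x})=\lambda_\alpha(G)$.

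On the norm side, strict convexity of $t\mapsto t^\alpha$ on $[0,\infty)$ for $\alpha>1$ gives $2((x_u+x_v)/2)^\alpha<x_u^\alpha+x_v^\alpha$, hence $\|\mathbf{y}\|_\alpha<1$. By the $k$-homogeneity of $P_G$, the unit vector $\mathbf{z}:=\mathbf{y}/\|\mathbf{y}\|_\alpha$ satisfies $P_G(\mathbf{z})=P_G(\mathbf{y})/\|\mathbf{y}\|_\alpha^{k}>\lambda_\alpha(G)$, contradicting the maximality defining $\lambda_\alpha(G)$. This forces $x_u=x_v$.

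No step in this outline is genuinely hard; the only mild subtlety is the initial reduction to a nonnegative eigenvector and the verification that the $\sigma$-pairing of $E_1$ is well defined and disjoint, both of which rest on the fact that $\sigma$ fixes every vertex other than $u$ and $v$. The remainder is a standard combination of AM--GM on the $E_2$-contribution and strict convexity of the $\ell_\alpha$-norm.
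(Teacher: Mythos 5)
The paper states Lemma~\ref{aut} as a citation from \cite{KNY2014} and gives no internal proof, so your argument has to stand on its own. The symmetrization itself is sound once $\mathbf{x}\ge 0$: the $\sigma$-pairing of $E_1$ is well defined (each $e\in E_1$ through $u$ maps to the distinct edge $\sigma(e)$ through $v$), the $E_0$- and $E_1$-sums are invariant under averaging, AM--GM handles $E_2$, strict convexity of $t\mapsto t^\alpha$ for $\alpha>1$ gives $\|\mathbf{y}\|_\alpha<1$ whenever $x_u\ne x_v$, and $\lambda_\alpha(G)>0$ (because $G$ has an edge) makes the rescaling $\mathbf{y}/\|\mathbf{y}\|_\alpha$ a genuine improvement.

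The flaw is the opening step ``WLOG $\mathbf{x}\ge 0$.'' Passing to $|\mathbf{x}|$ and proving the conclusion for $|\mathbf{x}|$ yields only $|x_u|=|x_v|$, not $x_u=x_v$, so this is not a legitimate reduction. In fact the lemma as literally printed is false for signed eigenvectors: take $k=3$, let $G$ consist of the single edge $\{u,v,w\}$, and set $x_u=3^{-1/\alpha}$, $x_v=x_w=-3^{-1/\alpha}$; this is an $\ell_\alpha$-unit vector with $P_G(\mathbf{x})=6\cdot 3^{-3/\alpha}=\lambda_\alpha(G)$, yet $x_u\ne x_v$. The correct hypothesis---and the only form in which the present paper ever invokes the lemma, in Lemmas~\ref{new} and~\ref{B4}---is that $\mathbf{x}$ is a \emph{nonnegative} eigenvector; under that hypothesis you should simply assume $\mathbf{x}\ge 0$ from the start, and then the rest of your argument is complete and correct. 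Note that nonnegativity is not a cosmetic convenience here: it is exactly what validates the $E_2$ comparison, since $\bigl(\tfrac{x_u+x_v}{2}\bigr)^2\prod_{w\in e\setminus\{u,v\}}x_w \ge x_u x_v\prod_{w\in e\setminus\{u,v\}}x_w$ can fail if the residual product is negative.
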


\begin{lem}[\hspace{1sp}\cite{KNY2014}]\label{nonne}
Let $\alpha\geq1$, and let $G$ be a $k$-graph such that every nonnegative eigenvector corresponding to $\lambda_{\alpha}(G)$ is positive.
If $H$ is a sub-hypergraph of $G$, then $\lambda_{\alpha}(H)<\lambda_{\alpha}(G)$, unless $H=G$.
\end{lem}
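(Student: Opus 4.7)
The plan is to prove this by the variational characterization of $\lambda_\alpha$, using the padding-by-zeros trick.

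First I would fix a nonnegative eigenvector $\mathbf{x}=(x_v)_{v\in V(H)}$ of $H$ with $\|\mathbf{x}\|_\alpha=1$ and $P_H(\mathbf{x})=\lambda_\alpha(H)$, and extend it to a vector $\tilde{\mathbf{x}}$ on $V(G)$ by setting $\tilde{x}_v=x_v$ for $v\in V(H)$ and $\tilde{x}_v=0$ for $v\in V(G)\setminus V(H)$. Then $\|\tilde{\mathbf{x}}\|_\alpha=1$. Since $E(H)\subseteq E(G)$ and all entries of $\tilde{\mathbf{x}}$ are nonnegative, the terms of $P_G(\tilde{\mathbf{x}})$ indexed by edges of $H$ contribute exactly $P_H(\mathbf{x})$, while the remaining terms (indexed by $E(G)\setminus E(H)$) are nonnegative. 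This gives
\[
\lambda_\alpha(G)\ \ge\ P_G(\tilde{\mathbf{x}})\ \ge\ P_H(\mathbf{x})\ =\ \lambda_\alpha(H),
\]
so only the strict inequality part remains.

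For strictness, I would argue by contradiction: suppose $\lambda_\alpha(G)=\lambda_\alpha(H)$. Then both displayed inequalities above are equalities. The first equality says $\tilde{\mathbf{x}}$ is a nonnegative eigenvector of $G$ corresponding to $\lambda_\alpha(G)$, so by hypothesis it must be strictly positive on all of $V(G)$. This forces $V(G)=V(H)$, because any vertex in $V(G)\setminus V(H)$ would receive the value $0$ in $\tilde{\mathbf{x}}$. The second equality says
\[
\sum_{e\in E(G)\setminus E(H)}\ \prod_{i\in e}\tilde{x}_i\ =\ 0,
\]
so every edge in $E(G)\setminus E(H)$ must contain a vertex $v$ with $\tilde{x}_v=0$; but $\tilde{\mathbf{x}}$ is strictly positive, so $E(G)\setminus E(H)=\emptyset$. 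Combining, $H=G$, the desired contradiction.

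The argument is essentially one paragraph once the padding is set up, so there is no serious technical obstacle; the entire content is in identifying the correct role of the hypothesis. The hypothesis ``every nonnegative eigenvector is positive'' is used in exactly one place, to rule out the degenerate scenario where the zero-extended eigenvector of the sub-hypergraph $H$ could itself serve as a maximizer for $G$. I would note for completeness that the argument is insensitive to whether $\alpha=1$ or $\alpha>1$, since only the variational definition $\lambda_\alpha(G)=\max_{\|\mathbf{y}\|_\alpha=1}P_G(\mathbf{y})$ is invoked, not the Lagrange eigenvalue equations.
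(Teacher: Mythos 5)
Your proof is correct. Note that the paper states this lemma as a citation to Kang--Nikiforov--Yuan \cite{KNY2014} without reproducing an argument, so there is no in-paper proof to compare against; the zero-padding/variational argument you give is the standard one and is self-contained, it correctly isolates the single place where the hypothesis (every nonnegative maximizer is positive) is used, and you are right that it relies only on the variational characterization $\lambda_\alpha(G)=\max_{\|\mathbf{y}\|_\alpha=1}P_G(\mathbf{y})$ and not on the Lagrange eigenequations, so it applies for all $\alpha\ge 1$ as required.
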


\begin{thm}[\hspace{1sp}\cite{KNY2014}]\label{kpt}
Let $l\geq k\geq2$, and let $G$ be an $l$-partite $k$-graph of order $n$.
For every $\alpha>1$,
$$\lambda_{\alpha}(G)\leq \lambda_{\alpha}(T^{k}_{l}(n)),$$
with equality if and only if $G=T^{k}_{l}(n)$.
\end{thm}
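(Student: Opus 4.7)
The plan is to argue in two stages: first, reduce the problem to complete $l$-partite $k$-graphs via Lemma \ref{nonne}; second, show that among these the Tur\'an hypergraph $T^k_l(n)$ is the unique maximizer, by a continuous exchange argument on part sizes.

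For the reduction, any $l$-partite $G$ with parts of sizes $n_1,\ldots,n_l$ embeds as a sub-hypergraph of $K := K^{k}(n_1,\ldots,n_l)$, the complete $l$-partite $k$-graph with these part sizes. To apply Lemma \ref{nonne} I verify that every nonnegative eigenvector of $K$ is strictly positive: by Lemma \ref{aut} such an eigenvector is constant on each part, say with value $y_i$ on the $i$-th part, and the eigenvector equation
\[
\lambda_\alpha(K)\,y_i^{\alpha-1} = (k-1)!\,e_{k-1}(p_1,\ldots,\widehat{p_i},\ldots,p_l), \quad p_j := n_j y_j,
\]
combined with $\lambda_\alpha(K)>0$, quickly forces each $y_i>0$. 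Lemma \ref{nonne} then yields $\lambda_\alpha(G) \le \lambda_\alpha(K)$, with equality iff $G = K$.

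For the main step, extend $F(t_1,\ldots,t_l) := \lambda_\alpha(K^{k}(t_1,\ldots,t_l))$ to real $t_i>0$ via the Lagrangian formulation
\[
F(t) = k!\,\max\bigl\{ e_k(t_1 y_1,\ldots,t_l y_l) : y\ge 0,\ \textstyle\sum_i t_i y_i^{\alpha} = 1 \bigr\}.
\]
The envelope theorem, together with the first-order condition $k!\,e_{k-1}(\{p_m : m \ne i\}) = \mu\alpha y_i^{\alpha-1}$, gives $\partial F/\partial t_i = \mu(\alpha-1)(y_i^*)^{\alpha}$ at the optimizer $y^*$, with $\mu = k\lambda_\alpha/\alpha > 0$. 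The crux is the strict monotonicity
\[
t_i > t_j \;\Longrightarrow\; y_i^* < y_j^*,
\]
which I derive by dividing the eigenvector equations at indices $i$ and $j$: setting $A = e_{k-1}(\{p_m : m \ne i,j\})$ and $B = e_{k-2}(\{p_m : m \ne i,j\})$ yields $(y_i^*/y_j^*)^{\alpha-1} = (A + p_j B)/(A + p_i B)$, and a short case analysis (if $y_i^* \ge y_j^*$ then $p_i > p_j$ from $t_i > t_j$, which forces the right-hand side strictly below $1$, a contradiction) closes it. Consequently $F$ is strictly Schur-concave on the simplex $\{t : \sum t_i = n,\, t_i > 0\}$.

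Whenever $(n_1,\ldots,n_l) \ne T^k_l(n)$, there exist indices $i,j$ with $n_i \ge n_j + 2$; along the straight-line path $(t_i(s), t_j(s)) = (n_i - s, n_j + s)$ for $s\in [0,1]$, $t_i(s) > t_j(s)$ on $[0,1)$ and so $dF/ds = \partial F/\partial t_j - \partial F/\partial t_i = \mu(\alpha-1)\bigl((y_j^*)^{\alpha} - (y_i^*)^{\alpha}\bigr) > 0$; integration gives $F(n_1,\ldots,n_i-1,\ldots,n_j+1,\ldots,n_l) > F(n_1,\ldots,n_l)$. Iterating this rebalancing move terminates at $T^k_l(n)$ with strict inequality preserved at every step. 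The principal obstacle is the strict monotonicity $t_i > t_j \Rightarrow y_i^* < y_j^*$; once that is in hand, the Schur-concavity bookkeeping and the rebalancing iteration are both essentially routine.
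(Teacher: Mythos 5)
The paper does not prove Theorem~\ref{kpt}; it is cited verbatim from \cite{KNY2014}, so there is no in-paper proof for me to compare your argument against, and I can only assess it on its own merits.

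Your two-stage plan is sound. The reduction to the complete $l$-partite $k$-graph $K$ via Lemma~\ref{nonne} works: by Lemma~\ref{aut} any maximizer is constant on parts, and if some $y_i=0$ the eigenequation forces fewer than $k$ nonzero part-values, making $\lambda_\alpha(K)=0$, a contradiction. The Lagrangian reformulation of $F$ is therefore legitimate, the first-order condition and the envelope computation $\partial F/\partial t_i=\mu(\alpha-1)(y_i^*)^\alpha$ with $\mu=k\lambda_\alpha/\alpha>0$ check out, and the crucial monotonicity $t_i>t_j\Rightarrow y_i^*<y_j^*$ follows correctly from $(y_i^*/y_j^*)^{\alpha-1}=(A+p_jB)/(A+p_iB)$ together with $B=e_{k-2}(\{p_m:m\neq i,j\})>0$, which uses $l\ge k$ and $y_m>0$. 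Three technical points should be tightened, though none is a wrong turn. (i) The envelope step as written presumes $F$ is differentiable, which in turn presumes the optimizer $y^*(t)$ is unique; lacking that, you should invoke Danskin's theorem for one-sided directional derivatives, observe that your monotonicity inequality holds for \emph{every} optimizer (it only uses the first-order conditions), so the right-derivative along $(n_i-s,n_j+s)$ is strictly positive, and note that $F$, a supremum of smooth functions over a compact set, is locally Lipschitz, so the fundamental theorem of calculus still delivers strict increase. (ii) If $G$ has an empty part, the segment starts at $t_j=0$, where the eigenequation at index $j$ is vacuous; you need $F$ to extend continuously to $t_j=0$ and $y_j^*$ to remain bounded as $t_j\to 0^+$, both of which hold from the first-order condition since $e_{k-1}(\{p_m:m\ne j\})$ is bounded and $\mu$ is bounded below by continuity of $\lambda_\alpha$. (iii) You should record that the rebalancing iteration terminates, e.g.\ because $\sum_i n_i^2$ strictly decreases at each step. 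With these patches the proof is complete.
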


\begin{thm}[\hspace{1sp}\cite{KNY2014}]\label{kpg}
Let $l\geq k\geq2$, and let $G$ be an $l$-partite $k$-graph of order $n$.
If $\alpha>1$, then
$$\lambda_{\alpha}(G)\leq \frac{(l)_{k}}{l^{k}}n^{k-k/\alpha},$$
with equality if and only if $l\mid n$ and $G=T^{k}_{l}(n)$.
\end{thm}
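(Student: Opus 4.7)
The plan is to bound $P_G(\mathbf{x})$ for a nonnegative unit eigenvector $\mathbf{x}$ by chaining three standard inequalities. Fix a nonnegative $\mathbf{x}$ with $\|\mathbf{x}\|_{\alpha}=1$ attaining $\lambda_{\alpha}(G)=P_G(\mathbf{x})$, let $V(G)=V_1\cup\cdots\cup V_l$ be the $l$-partition, and set $s_i:=\sum_{v\in V_i}x_v$. Since every edge of the $l$-partite $k$-graph $G$ is a transversal of the parts, and transversal $k$-subsets correspond bijectively to a choice of $S\in\binom{[l]}{k}$ together with one vertex from each $V_i$ with $i\in S$,
\begin{equation*}
P_G(\mathbf{x})=k!\sum_{e\in E(G)}\prod_{v\in e}x_v\;\le\;k!\sum_{S\in\binom{[l]}{k}}\prod_{i\in S}s_i\;=\;k!\,e_k(s_1,\ldots,s_l),
\end{equation*}
with equality iff $G$ contains every transversal $k$-subset whose coordinates are all positive.

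Next I would apply Maclaurin's inequality to the nonnegative reals $s_1,\ldots,s_l$, obtaining
\begin{equation*}
e_k(s_1,\ldots,s_l)\;\le\;\binom{l}{k}\left(\frac{s_1+\cdots+s_l}{l}\right)^{k}\;=\;\frac{\binom{l}{k}}{l^{k}}\Bigl(\sum_{v\in V(G)}x_v\Bigr)^{k},
\end{equation*}
with equality iff $s_1=\cdots=s_l$. Since $\alpha>1$, the power-mean (H\"older) inequality gives $\sum_v x_v\le n^{1-1/\alpha}\|\mathbf{x}\|_{\alpha}=n^{1-1/\alpha}$, with equality iff $\mathbf{x}$ is constant. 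Multiplying the three estimates yields
\begin{equation*}
\lambda_{\alpha}(G)\;\le\;\frac{k!}{l^{k}}\binom{l}{k}n^{k-k/\alpha}\;=\;\frac{(l)_{k}}{l^{k}}n^{k-k/\alpha},
\end{equation*}
which is the claimed bound.

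For the equality case, suppose the bound is attained. The equality condition in the power-mean step (using $\alpha>1$) forces $\mathbf{x}$ to be a positive constant vector; combined with $\|\mathbf{x}\|_{\alpha}=1$ this gives $x_v=n^{-1/\alpha}$ for all $v$. Then $s_i=|V_i|n^{-1/\alpha}$, and Maclaurin equality forces $|V_1|=\cdots=|V_l|$, so $l\mid n$. Because $\mathbf{x}$ is now strictly positive, the equality in the first step upgrades to the statement that $G$ contains every transversal $k$-subset on $(V_1,\ldots,V_l)$, i.e., $G=T^{k}_{l}(n)$. A direct substitution of the constant vector into $P_{T^{k}_{l}(n)}$ verifies the converse. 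The main subtlety is the equality analysis in the first step when $\mathbf{x}$ might have zero coordinates, but the strict power-mean equality condition for $\alpha>1$ precludes this; if preferred, one may invoke Theorem~\ref{kpt} to reduce immediately to $G=T^{k}_{l}(n)$ and then check $l\mid n$ separately.
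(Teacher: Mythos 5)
The paper does not prove this theorem; it is quoted from Kang, Nikiforov and Yuan \cite{KNY2014}, so there is no in-paper proof to compare against. That said, your argument is correct and self-contained: the three-inequality chain (transversal overcount $\rightarrow$ Maclaurin $\rightarrow$ H\"older with exponent $\alpha>1$) gives the bound, and running the equality conditions in reverse order — H\"older forces $\mathbf{x}$ constant, then Maclaurin forces the part sums $s_i$ equal (hence $l\mid n$), then positivity of $\mathbf{x}$ upgrades the first step to $G=T^{k}_{l}(n)$ — correctly pins down the extremal configuration, and the direct substitution of $\mathbf{x}=n^{-1/\alpha}\mathbf{1}$ into $P_{T^k_l(n)}$ when $l\mid n$ verifies the converse. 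One small presentational remark: since all three steps must be tight simultaneously for the product to be tight, it is worth stating explicitly that none of the intermediate quantities can vanish at equality (they cannot, since the right-hand side $(l)_k l^{-k}n^{k-k/\alpha}$ is strictly positive), so the ``all three are equalities'' deduction is legitimate; you implicitly use this but do not say it. Your closing remark that one could instead invoke Theorem~\ref{kpt} is fine as a sanity check, though it is no shortcut for the $l\mid n$ part and it leans on a companion result from the same source, so the self-contained chain you give is preferable.
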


\section{Main Results}

In this section, we give a spectral stability  result for nondegenerate hypergraphs. Applying the result, we present two general theorems for  determining the maximum $\alpha$-spectral radius among all $n$-vertex $\mathcal{F}$-free $k$-graphs, where $\mathcal{F}$ is a certain family of non-degenerate $k$-graphs.

\begin{thm}[Spectral stability]\label{cri}
Let $k\geq2$, $\alpha>1$, $0<\e<1$, and $\mathcal{F}$ be a family of $k$-graphs with $\pi(\mathcal{F})>0$. Let $\mathcal{G}_{n}$ be the collection of all $n$-vertex $\mathcal{F}$-free $k$-graphs with minimum degree at least  $(1-\e)\pi(\mathcal{F})\binom{n}{k-1}$, and let
$\lambda_{\alpha}(\mathcal{G}_{n})=\max\{\lambda_{\alpha}(G):G\in \mathcal{G}_{n}\}$.
Suppose that
there exists  $N$ such that
 for all $n\geq N$,
\begin{equation}\label{cri1}
\lambda_{\alpha}(\mathcal{G}_{n}) \ge  \lambda_{\alpha}(\mathcal{G}_{n-1}) + (k-k/\alpha)(1-\e')\pi(\mathcal{F})n^{k-k/\alpha-1},
\end{equation}
where $\e'=\e\pi(\mathcal{F})(\alpha-1)/(2k \alpha)$.
Then there exists $n_{0}$ such that if $H$ is an $\mathcal{F}$-free graph on $n\geq n_{0}$ vertices, then
$$\lambda_{\alpha}(H)\leq \lambda_{\alpha}(\mathcal{G}_{n}).$$
In addition, if the equality holds, then $H\in\mathcal{G}_{n}$.
\end{thm}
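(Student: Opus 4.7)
My plan is to proceed by strong induction on $n$, choosing $n_0$ large enough that the asymptotic estimates below take effect. Suppose toward a contradiction that an $\mathcal{F}$-free $k$-graph $H$ on $n \geq n_0$ vertices satisfies $\lambda_\alpha(H) \geq \lambda_\alpha(\mathcal{G}_n)$ but $H \notin \mathcal{G}_n$. Then there is a vertex $v$ with $d_H(v) < (1-\e)\pi(\mathcal{F})\binom{n}{k-1}$; since $H-v$ is $\mathcal{F}$-free on $n-1$ vertices, the induction hypothesis gives $\lambda_\alpha(H-v) \leq \lambda_\alpha(\mathcal{G}_{n-1})$, and it suffices to prove the deletion estimate
\begin{equation*}
\lambda_\alpha(H) - \lambda_\alpha(H-v) < (k - k/\alpha)(1 - \e')\pi(\mathcal{F})\, n^{k - k/\alpha - 1},
\end{equation*}
since combined with the hypothesis \eqref{cri1} this forces the contradiction $\lambda_\alpha(H) < \lambda_\alpha(\mathcal{G}_n)$. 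The same argument also rules out $\lambda_\alpha(H) = \lambda_\alpha(\mathcal{G}_n)$ with $H \notin \mathcal{G}_n$, handling the equality case.

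For the deletion inequality, I would let $\mathbf{x}$ be a nonnegative unit $\ell_\alpha$-eigenvector of $\lambda_\alpha(H)$, split $P_H(\mathbf{x})$ into edges through $v$ and edges avoiding $v$, and invoke the eigenvalue equation at $v$ to write $P_{H-v}(\mathbf{x}|_{V\setminus v}) = \lambda_\alpha(H)(1 - kx_v^\alpha)$. The variational bound $P_{H-v}(\mathbf{x}|_{V\setminus v}) \leq \lambda_\alpha(H-v)(1 - x_v^\alpha)^{k/\alpha}$ then yields
\[
\lambda_\alpha(H)(1 - kx_v^\alpha) \leq \lambda_\alpha(H-v)(1 - x_v^\alpha)^{k/\alpha},
\]
and a first-order expansion of $(1-x_v^\alpha)^{k/\alpha}/(1-kx_v^\alpha)$ around $x_v^\alpha = 0$ produces $\lambda_\alpha(H) - \lambda_\alpha(H-v) \leq (k-k/\alpha)\lambda_\alpha(H-v)\,x_v^\alpha(1 + O(x_v^\alpha))$. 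This is exactly why the coefficient $(k - k/\alpha)$ appears in \eqref{cri1}.

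The remaining task is to bound $x_v^\alpha$ in terms of the small degree of $v$. Starting from $\lambda_\alpha(H)x_v^\alpha = (k-1)!\sum_{e \ni v}\prod_{i \in e}x_i$, I would apply the AM-GM inequality $\prod_{i \in e} x_i \leq \frac{1}{k}\sum_{i \in e} x_i^k$ together with the codegree bound $|\{e \ni v : i \in e\}| \leq d_H(v)$ to get $\lambda_\alpha(H)x_v^\alpha \leq \frac{(k-1)!}{k}d_H(v)\sum_i x_i^k$, and then use the $\ell_\alpha$-norm estimate $\sum_i x_i^k \leq \max(1, n^{1-k/\alpha})$. Plugging in $d_H(v) < (1-\e)\pi(\mathcal{F})\binom{n}{k-1}$, the asymptotic lower bound $\lambda_\alpha(H) \geq (1-\e' - o(1))\pi(\mathcal{F})n^{k-k/\alpha}$ obtained by telescoping \eqref{cri1} from the base case, and the asymptotic upper bound $\lambda_\alpha(H-v) \leq \pi(\mathcal{F})n^{k-k/\alpha}(1+o(1))$ from Lemma \ref{sed}, produces a deletion bound whose leading coefficient is $(1-\e)(1+o(1))$. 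The specific value $\e' = \e\pi(\mathcal{F})(\alpha-1)/(2k\alpha)$ is calibrated so that $1-\e'$ strictly dominates $(1-\e)(1+o(1))$ for all sufficiently large $n$.

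The hard part will be achieving the required sharpness in bounding $x_v^\alpha$: the crude AM-GM bound above is tight only when $\mathbf{x}$ is close to uniform, and for $\alpha \leq k$ I anticipate needing the auxiliary estimate $\|\mathbf{x}\|_\infty \leq O(n^{-1/\alpha})$, extracted by applying the eigenvalue equation at a vertex of maximum weight, in order to push the exponent of $n$ in the final bound down to exactly $-1$ with a leading constant at most $(1-\e)$. The precise form of $\e'$ is then reverse-engineered from this calculation to leave positive slack once all the $o(1)$ errors from the asymptotic estimates have been absorbed.
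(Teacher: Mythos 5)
Your high-level plan---induction on $n$, a one-vertex deletion estimate, and a bound on the eigenvector entry at a low-degree vertex---has the same broad shape as the paper's argument, and the deletion inequality you derive from $P_{H-v}(\mathbf{x}|_{V\setminus v}) = \lambda_\alpha(H)(1-kx_v^\alpha) \le \lambda_\alpha(H-v)(1-x_v^\alpha)^{k/\alpha}$ is exactly the paper's Lemma~\ref{mind2}. But the crucial step, bounding $x_v^\alpha$, has a genuine gap, and you are looking at the wrong vertex.

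First, the AM--GM bound $x_e \le \tfrac1k\sum_{i\in e}x_i^k$ combined with $\sum_i x_i^k \le \max(1,n^{1-k/\alpha})$ and $d_H(v)\le (1-\e)\pi(\mathcal{F})\binom{n}{k-1}$ gives only $\lambda_\alpha(H)x_v^\alpha \lesssim \pi(\mathcal{F})n^{k-k/\alpha}$, i.e.\ $x_v^\alpha = O(1)$, nowhere near the $O(1/n)$ you need. Even the sharper route via H\"older and Maclaurin (the tools the paper actually uses) applied \emph{directly} to the low-degree vertex $v$ yields $x_v^\alpha \lesssim (1-\e)\pi(\mathcal{F})^{-1/(\alpha-1)}/n$ after the dust settles, and the extraneous factor $\pi(\mathcal{F})^{-1/(\alpha-1)}>1$ can be arbitrarily large when $\alpha\to 1^+$ or $\pi(\mathcal{F})$ is small; this does not beat the threshold $(1-\e'')/n$ needed for the deletion estimate to close. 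Second, the auxiliary claim $\|\mathbf{x}\|_\infty \le O(n^{-1/\alpha})$ does not rescue you unless the implicit constant is essentially $1$, which is false in general for a unit $\ell_\alpha$ vector.

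What the paper does differently is subtle but essential: in Lemma~\ref{mind1} it uses the eigenequation at the low-degree vertex $u$ only to get the chain $\lambda x_{\min}^{\alpha-1}\le \lambda x_u^{\alpha-1}=(k-1)!\sum_{e\ni u}x_{e\setminus\{u\}}$, but the quantity actually bounded is $x_{\min}$, and the vertex that gets removed is the one achieving the minimum entry, \emph{not} $u$. Moreover it argues by contradiction: assuming $x_{\min}^\alpha\ge(1-\e'')/n$ forces the ``non-edge'' contribution $(\binom{n}{k-1}-\delta)x_{\min}^{\alpha(k-1)}$ to absorb most of the Maclaurin budget $\binom{n}{k-1}/n^{k-1}$, leaving the ``edge'' sum $\sum_{e\ni u}x_{e\setminus\{u\}}^\alpha$ below $\pi(\mathcal{F})\binom{n}{k-1}/n^{k-1}$, which is incompatible with the lower bound $\lambda\ge(1-2\e')\pi(\mathcal{F})n^{k-k/\alpha}$. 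This contradiction device is exactly what eliminates the unwanted $\pi(\mathcal{F})^{-1/(\alpha-1)}$ factor, and it is absent from your proposal. Finally, the paper runs the argument as an iterated peeling down to $N_0$ vertices, deriving the absurdity $\lambda_\alpha(H_{N_0})\ge N_0^{k-k/\alpha}$; this sidesteps the induction base case issue (what happens at $n=n_0$ when you want to invoke the statement at $n_0-1$), which your strong-induction framing does not address.
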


\begin{rmk}
The proof of Theorem \ref{cri} is  lengthy, so we defer it to Section \ref{Sec-5}. Furthermore,  for all sufficiently large $n$, if 
\begin{equation}\label{KLM-1}
\bigg|\ex(n,\mathcal{F})-\ex(n-1,\mathcal{F})-\pi(\mathcal{F})\binom{n}{k-1}\bigg|<c n^{k-1}
\end{equation}
and
\begin{equation}\label{KLM-2}
\bigg|\lambda_{\alpha}(\mathcal{G}_{n})-k!\ex(n,\mathcal{F})n^{-k/\alpha}\bigg|\leq c n^{k-k/\alpha-1},
\end{equation}
then there exists a constant $r$ such that
$$\lambda_{\alpha}(\mathcal{G}_{n}) \ge  \lambda_{\alpha}(\mathcal{G}_{n-1}) + (k-k/\alpha)(1-rc)\pi(\mathcal{F})n^{k-k/\alpha-1}.$$
 Therefore, under the conditions \eqref{KLM-1} and \eqref{KLM-2}, for sufficiently small $c$, by Theorem \ref{cri}, we also get the corresponding result, which generalizes the criterion by Keevash, Lenz and Mubayi \cite[Theorem $1.4$]{KLM2014}.
\end{rmk}

For a $k$-pattern $P$, we use $\Col(P)$ to denote the set of all
$P$-colorable $k$-graphs. 

\begin{thm}\label{PL}
Let $(\mathcal{F},K^{k}_{l})$ be a Tur\'an pair, where $\mathcal{F}$ is a family of $k$-graphs that is degree-stable with respect to $\Col(K^{k}_{l})$, and $l\geq k\geq 2$.
For any $\alpha>1$, there exists $n_{0}$ such that if $G$ is an  $\mathcal{F}$-free $k$-graph on $n \ge n_{0}$ vertices, then $\lambda_{\alpha}(G)\leq\lambda_{\alpha}(T^{k}_{l}(n))$, with equality if and only if $G=T^{k}_{l}(n)$.
\end{thm}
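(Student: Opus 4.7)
The plan is to derive Theorem \ref{PL} directly from the spectral stability criterion Theorem \ref{cri}, applied to the family $\mathcal{F}$, by identifying the intermediate class $\mathcal{G}_n$ in that criterion with (essentially) the single graph $T^k_l(n)$. First I would choose the parameter $\e$ in the definition of $\mathcal{G}_n$ so small that $(1-\e)\pi(\mathcal{F})\binom{n}{k-1} \ge (\pi(\mathcal{F})/(k-1)!-\e^\ast)n^{k-1}$ for all large $n$, where $\e^\ast>0$ is the constant supplied by the degree-stability of $\mathcal{F}$ with respect to $\Col(K^k_l)$. With this choice, every $H\in\mathcal{G}_n$ is forced to be $K^k_l$-colorable, i.e.\ $l$-partite. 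Conversely, since $(\mathcal{F},K^k_l)$ is a Tur\'an pair, $T^k_l(n)$ is $\mathcal{F}$-free, and a direct count of vertex degrees in $T^k_l(n)$, combined with the identity $\pi(\mathcal{F})=(l)_k/l^k$ read off from Lemma \ref{t}, yields $\delta(T^k_l(n))=\pi(\mathcal{F})\binom{n}{k-1}+O(n^{k-2})$, so $T^k_l(n)\in\mathcal{G}_n$ once $n$ is sufficiently large.

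Consequently $\mathcal{G}_n$ is a nonempty subfamily of $l$-partite $k$-graphs, and Theorem \ref{kpt} gives $\lambda_\alpha(\mathcal{G}_n)=\lambda_\alpha(T^k_l(n))$, uniquely attained at $T^k_l(n)$. The main technical step is then to verify the growth condition \eqref{cri1}. Using the lower bound \eqref{ave} together with $t^k_l(n)=\frac{\pi(\mathcal{F})}{k!}n^k+O(n^{k-2})$ from Lemma \ref{t}, and the upper bound of Theorem \ref{kpg}, one sandwiches
\[
\pi(\mathcal{F})\,n^{k-k/\alpha} + O(n^{k-k/\alpha-2}) \le \lambda_\alpha(T^k_l(n)) \le \pi(\mathcal{F})\,n^{k-k/\alpha}.
\]
A Taylor expansion of $n^{k-k/\alpha}-(n-1)^{k-k/\alpha}$ then gives
\[
\lambda_\alpha(T^k_l(n)) - \lambda_\alpha(T^k_l(n-1)) \ge (k-k/\alpha)\,\pi(\mathcal{F})\,n^{k-k/\alpha-1} + O(n^{k-k/\alpha-2}),
\]
which exceeds $(k-k/\alpha)(1-\e')\pi(\mathcal{F})n^{k-k/\alpha-1}$ once $n$ is large, establishing \eqref{cri1}.

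Having verified the hypotheses of Theorem \ref{cri}, I conclude that for $n\ge n_0$ every $\mathcal{F}$-free $k$-graph $H$ on $n$ vertices satisfies $\lambda_\alpha(H)\le\lambda_\alpha(\mathcal{G}_n)=\lambda_\alpha(T^k_l(n))$. For the equality case, Theorem \ref{cri} forces $H\in\mathcal{G}_n$, so $H$ is $l$-partite, and the uniqueness clause of Theorem \ref{kpt} then yields $H=T^k_l(n)$.

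The only delicate point I anticipate is ensuring that the sandwich on $\lambda_\alpha(T^k_l(n))$ is tight enough: the upper bound from Theorem \ref{kpg} is exact only when $l\mid n$, while the lower bound from \eqref{ave} loses an $O(n^{k-k/\alpha-2})$ term. This discrepancy is comfortably of lower order than the $\Theta(n^{k-k/\alpha-1})$ increment demanded by \eqref{cri1}, so no finer spectral analysis of $T^k_l(n)$ (such as solving its eigenvector equations exactly) is required; Lemma \ref{t} and Theorem \ref{kpg} together supply all the asymptotics we need.
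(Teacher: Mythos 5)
Your proposal follows essentially the same route as the paper: choose $\e$ in the definition of $\mathcal{G}_n$ small enough that degree-stability forces every member of $\mathcal{G}_n$ to be $l$-partite, observe $T^k_l(n)\in\mathcal{G}_n$, and verify the growth condition \eqref{cri1} by sandwiching $\lambda_\alpha(T^k_l(n))$ between the trivial lower bound \eqref{ave} combined with Lemma \ref{t} and the upper bound of Theorem \ref{kpg}, then conclude via Theorem \ref{cri} and Theorem \ref{kpt}. The argument is correct and matches the paper's proof step for step.
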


\begin{proof}
Since $(\mathcal{F},K^{k}_{l})$ is a Tur\'an pair, by Lemma \ref{t} we have
$$\ex(n,\mathcal{F})=e(T^{k}_{l}(n))=t^{k}_{l}(n),~\pi(\mathcal{F})=\lim\limits_{n\to \infty}\frac{t^{k}_{l}(n)}{\binom{n}{k}}=\frac{(l)_{k}}{l^{k}}.$$
By the degree-stability of $\mathcal{F}$ with respect to $\Col(K^{k}_{l})$,  there exist $n_{0}$ and $\e' \in(0,1)$ such that   every $\mathcal{F}$-free $k$-graph $H$ on $n\geq n_{0}$ vertices with $\delta(H)\geq (\pi(\mathcal{F})/(k-1)!-\e')n^{k-1}$ is contained in $\Col(K^{k}_{l})$.
Let $\e:=\e'/2$ and define $\mathcal{G}_n$ as the family  of all $n$-vertex $\mathcal{F}$-free $k$-graphs $G$ with  $\delta(G) \ge (1-\e)\pi(\mathcal{F})\binom{n}{k-1}$.  
For sufficiently large $n$ and any $G \in \mathcal{G}_n$, we have
$$\delta(G) \ge (1-\e)\pi(\mathcal{F})\binom{n}{k-1}\geq (\pi(\mathcal{F})/(k-1)!-\e')n^{k-1},$$
which implies that $G$ is a member of $\Col(K_l^k)$ and hence $G$ is $l$-partite.

By the definition of Tur\'an pair, $T^{k}_{l}(n)$ is $\mathcal{F}$-free as it is $K_l^k$-colorable, and it satisfies
$$\delta(T^{k}_{l}(n))\geq e(T^{k}_{l}(n))-e(T^{k}_{l}(n-1))=\frac{\pi(\mathcal{F})}{(k-1)!}n^{k-1}+O(n^{k-2}),$$
implying that $T^{k}_{l}(n)\in \mathcal{G}_n$.
By  Lemma \ref{t} and Theorem \ref{kpg}, for sufficiently large $n$,
\begin{displaymath}
\begin{split}
\lambda_{\alpha}(\mathcal{G}_{n})-\lambda_{\alpha}(\mathcal{G}_{n-1})
&\geq \lambda_{\alpha}(T^{k}_{l}(n))-\frac{(l)_{k}}{l^{k}}(n-1)^{k-k/\alpha}\\
&\geq k!e(T^{k}_{l}(n))n^{-k/\alpha}-\frac{(l)_{k}}{l^{k}}(n-1)^{k-k/\alpha}\\
&= \frac{(l)_{k}}{l^{k}}n^{k-k/\alpha}+O(n^{k-k/\alpha-2})-\frac{(l)_{k}}{l^{k}}(n-1)^{k-k/\alpha}\\
&=(k-k/\alpha)\pi(\mathcal{F})n^{k-k/\alpha-1}+o(n^{k-k/\alpha-1}),
\end{split}
\end{displaymath}
where the 2nd inequality follows from \eqref{ave}.
So, we get the result by Theorems \ref{cri} and \ref{kpt}.
\end{proof}

A $2k$-graph $G$ is called \emph{bipartite-like} if its vertex set has a bipartition such that each edge contains exactly $k$ vertices from each part.
An edge maximal bipartite-like $2k$-graph is called \emph{complete bipartite-like}.
Let $B_{2k}(n)$ be the complete balanced bipartite-like $2k$-graph on $n$ vertices.

\begin{lem}\label{new}
Let $\alpha>1$, and let $G$ be a complete bipartite-like $4$-graph on $n$ vertices.
If $H$ is a sub-hypergraph of $G$, then $\lambda_{\alpha}(H)<\lambda_{\alpha}(G)$, unless $H=G$.
\end{lem}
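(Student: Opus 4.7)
The plan is to reduce the statement to Lemma \ref{nonne}: it suffices to show that every nonnegative eigenvector of $G$ corresponding to $\lambda:=\lambda_{\alpha}(G)$ must be strictly positive, and the conclusion then follows immediately. Let $V(G)=A\cup B$ be the bipartition witnessing $G$ being complete bipartite-like, with $|A|=a$ and $|B|=b$. We may assume $a,b\ge 2$ (otherwise $G$ has no edges and the claim is vacuous in the only interesting direction); in particular, by \eqref{ave} we have $\lambda>0$.

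My first step is to exploit the large automorphism group. Every transposition of two vertices lying in the same part of the bipartition is an automorphism of $G$, so Lemma \ref{aut} forces any nonnegative eigenvector $\mathbf{x}$ for $\lambda$ to be constant on $A$ and on $B$; write $\mathbf{x}\equiv x_A$ on $A$ and $\mathbf{x}\equiv x_B$ on $B$ with $x_A,x_B\ge 0$.

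Next I would write out the eigen-equation at a vertex $v$ of $A$: the edges of $G$ through $v$ are obtained by choosing one more vertex from $A$ and two from $B$, so there are $(a-1)\binom{b}{2}$ of them, each evaluating to $x_A x_B^{2}$. This gives a relation of the shape $\lambda x_A^{\alpha-1}=c_A\,x_A x_B^{2}$ for an explicit positive constant $c_A$; by the same count in the other part, the equation at a vertex of $B$ has the shape $\lambda x_B^{\alpha-1}=c_B\,x_B x_A^{2}$ with $c_B>0$. If $x_A=0$, the second equation combined with $\lambda>0$ forces $x_B=0$, contradicting $\|\mathbf{x}\|_\alpha=1$; by symmetry $x_B\neq 0$ as well, and hence $\mathbf{x}$ is strictly positive. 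Lemma \ref{nonne} then yields $\lambda_{\alpha}(H)<\lambda_{\alpha}(G)$ for every proper sub-hypergraph $H$.

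There is no real obstacle in this plan: the argument is essentially a two-variable reduction driven by the symmetric groups on $A$ and $B$ acting by automorphisms. The only points that deserve attention are verifying that Lemma \ref{aut} is applicable (which needs $G$ to contain at least one edge, equivalently $a,b\ge 2$) and that the edge counts produce genuinely positive coefficients $c_A$ and $c_B$ in the eigen-equations, which is exactly where the hypothesis $a,b\ge 2$ is used.
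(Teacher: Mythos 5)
Your proposal is correct and follows essentially the same route as the paper: reduce to Lemma \ref{nonne}, use Lemma \ref{aut} to force the eigenvector to be constant on each part, and rule out a zero part. The paper concludes the positivity step slightly more directly (a zero part forces $P_G(\mathbf{x})=\lambda_\alpha(G)=0$, contradicting $G$ nonempty), whereas you invoke the eigen-equation at a vertex of the opposite part; both are valid and differ only cosmetically.
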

\begin{proof}
Let $V_{1}$ and $V_{2}$ be the two parts $G$, and let $\mathbf{x}$ be a nonnegative eigenvector of $G$ corresponding to $\lambda_{\alpha}(G)$.
From Lemma \ref{aut},  all entries of $\mathbf{x}$ indexed by vertices in the same part are equal.
If there exists a vertex $u\in V_{j}$ ($j\in \{1,2\}$) such that $x_{u}=0$, then $x_{v}=0$ for all $v\in V_{j}$. 
This implies $\lambda_{\alpha}(G)=0$, which is impossible since $G$ is nonempty.
Thus, every nonnegative eigenvector corresponding  to $\lambda_{\alpha}(G)$ is positive.
The result follows from Lemma \ref{nonne}.
\end{proof}

\begin{lem}\label{B4}
Let $G$ be a bipartite-like $4$-graph on $n$ vertices.
If $\alpha>1$, then
$$\lambda_{\alpha}(G)\leq \lambda_{\alpha}(B_{4}(n))\leq \frac{3}{8}(n-2)^{2}n^{2-4/\alpha},$$
with left equality if and only if $G=B_{4}(n)$ and right equality if and only if $2 \mid n$.
\end{lem}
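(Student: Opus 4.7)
The plan is to pass from $G$ to a complete bipartite-like $4$-graph via sub-hypergraph monotonicity, evaluate $\lambda_\alpha$ there in closed form, and then optimize over the part sizes.

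First, let $G$ have bipartition $(V_1,V_2)$ of sizes $a,b$ with $a+b=n$, and let $K^{*}_{a,b}$ denote the unique complete bipartite-like $4$-graph on that bipartition. Since $G$ is a sub-hypergraph of $K^{*}_{a,b}$, Lemma \ref{new} yields $\lambda_\alpha(G)\le \lambda_\alpha(K^{*}_{a,b})$, with equality iff $G=K^{*}_{a,b}$. By Lemma \ref{aut}, a nonnegative $\alpha$-eigenvector of $K^{*}_{a,b}$ is constant on each part, say $x$ on $V_1$ and $y$ on $V_2$. From $P_{K^{*}_{a,b}}(\mathbf{x})=6a(a-1)b(b-1)x^2y^2$ under $ax^\alpha+by^\alpha=1$, I will substitute $u=ax^\alpha$, $v=by^\alpha$ (so $x^2y^2=(uv)^{2/\alpha}(ab)^{-2/\alpha}$) and use $\max_{u+v=1}uv=1/4$ to deduce
\begin{equation*}
\lambda_\alpha(K^{*}_{a,b})=6\cdot 2^{-4/\alpha}\,(a-1)(b-1)(ab)^{1-2/\alpha}.
\end{equation*}

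Next, I will parametrize $a=n/2+s$, $b=n/2-s$ and show the function
\begin{equation*}
\phi(s):=\bigl[(n/2-1)^2-s^2\bigr]\bigl[(n/2)^2-s^2\bigr]^{1-2/\alpha}
\end{equation*}
is strictly decreasing in $|s|$ on $[0,n/2-1]$. A direct logarithmic differentiation gives
\begin{equation*}
\frac{\phi'(s)}{\phi(s)}=-2s\left[\frac{1}{(n/2-1)^2-s^2}+\frac{1-2/\alpha}{(n/2)^2-s^2}\right],
\end{equation*}
and combining the bracketed terms over a common denominator produces a numerator $(n/2)^2+(1-2/\alpha)(n/2-1)^2-(2-2/\alpha)s^2$, which equals $n-1>0$ at $s^2=(n/2-1)^2$ and is decreasing in $s^2$ since $2-2/\alpha>0$. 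Hence the bracket is strictly positive on $[0,n/2-1)$, forcing $\phi'(s)<0$ for $s>0$.

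Finally, substituting $\phi(0)=(n/2-1)^2(n/2)^{2-4/\alpha}$ into the closed formula for $\lambda_\alpha$ yields the upper bound $\tfrac{3}{8}(n-2)^2n^{2-4/\alpha}$. Among integer pairs $(a,b)$ with $a+b=n$, the minimum of $|s|=|a-n/2|$ is $0$ when $n$ is even and $1/2$ when $n$ is odd; in each case this minimum is realized uniquely by $B_4(n)$, and strict monotonicity of $\phi$ gives strict inequality for every other bipartition. Chaining the steps produces $\lambda_\alpha(G)\le\lambda_\alpha(K^{*}_{a,b})\le\lambda_\alpha(B_4(n))\le\tfrac{3}{8}(n-2)^2n^{2-4/\alpha}$, with left equality iff $G=B_4(n)$ and right equality iff $2\mid n$. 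The main technical hurdle will be the sign analysis of $\phi'$ when $1-2/\alpha<0$ (i.e.\ $1<\alpha<2$), where the factors $(a-1)(b-1)$ and $(ab)^{1-2/\alpha}$ pull in opposite directions as $|s|$ grows; the common-denominator argument above resolves this using only the constraint $\alpha>1$, equivalently $1-2/\alpha>-1$.
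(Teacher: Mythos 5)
Your argument is correct, and the overall pipeline matches the paper's: both pass from $G$ to the complete bipartite-like $4$-graph $K^{*}_{a,b}$ via Lemma~\ref{new}, use Lemma~\ref{aut} to reduce to an eigenvector constant on the two parts, optimize over the mass split to extract the factor $2^{-4/\alpha}$, and then optimize over the part sizes. The genuine difference is in that final part-size optimization. The paper writes
$f(t)=\tfrac14 t^{1-2/\alpha}(n-t)^{1-2/\alpha}(t-1)(n-t-1)$
and handles the sign of $1-2/\alpha$ by a two-case decomposition: for $\alpha\ge 2$ it factors $f$ as a product of two nonnegative, symmetric, unimodal functions $g_1h_1$, while for $1<\alpha<2$ (where $g_1$ is no longer unimodal in the right direction) it rewrites $f=g_2+h_2$ as a sum of two symmetric increasing-then-decreasing pieces. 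You instead reparametrize by $s=a-n/2$ and differentiate $\ln\phi(s)$, combining the two resulting terms over a common denominator; the key observation that the numerator
$(n/2)^2+(1-2/\alpha)(n/2-1)^2-(2-2/\alpha)s^2$
is an affine decreasing function of $s^2$ whose value at the extreme $s^2=(n/2-1)^2$ is $n-1>0$ handles the potentially negative exponent $1-2/\alpha$ in one stroke, using only $\alpha>1$. This buys you a case-free argument and directly yields strict monotonicity of $\phi$ away from $s=0$, which in turn gives the equality characterizations cleanly. One small bookkeeping point that would need spelling out in a polished write-up: the discussion of $\phi$ is over continuous $s$, but the actual $a$ are integers with $a,b\ge 2$ (the cases $a\in\{0,1\}$ have $\lambda_\alpha=0$ trivially); the range $[0,n/2-1]$ you quote for $|s|$ includes $b=1$, where the formula still returns $0$, so nothing breaks, but it is worth saying explicitly that degenerate bipartitions contribute $\lambda_\alpha=0$ and are dominated.
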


\begin{proof}
Suppose that $G$ is a bipartite-like $4$-graph on $n$ vertices with maximum $\alpha$-spectral radius.
By Lemma \ref{new}, $G$ is a complete bipartite-like $4$-graph with a positive eigenvector $\mathbf{x}$ corresponding to $\lambda_{\alpha}(G)$.
Let $V_{1}$ and $V_{2}$ be two parts of $G$, where $|V_{1}|:=t$.
By Lemma \ref{aut}, 
 we can assume that $x_{v}=:(\frac{\gamma}{t})^{1/\alpha}$ if $v\in V_{1}$ and $x_{v}=:(\frac{1-\gamma}{n-t})^{1/\alpha}$ if $v\in V_{2}$, for some $ \gamma \in (0,1)$.
 Then
 \begin{displaymath}
\begin{split}
 \lambda_{\alpha}(G)&=
 4!\max_{0< \gamma < 1}\binom{t}{2}\binom{n-t}{2}\Big(\frac{\gamma}{t}\Big)^{2/\alpha}\Big(\frac{1-\gamma}{n-t}\Big)^{2/\alpha}\\
 &=4!\times2^{-4/\alpha}\binom{t}{2}\binom{n-t}{2}t^{-2/\alpha}(n-t)^{-2/\alpha}.
\end{split}
\end{displaymath}

Consider the following function of $t$ on $[2,n-2]$:
\begin{displaymath}
\begin{split}
f(t)&=\binom{t}{2}\binom{n-t}{2}t^{-2/\alpha}(n-t)^{-2/\alpha}\\
&=\frac{1}{4}t^{1-2/\alpha}(n-t)^{1-2/\alpha}(t-1)(n-t-1).
\end{split}
\end{displaymath}
Write  $g_{1}(t)=\frac{1}{4}t^{1-2/\alpha}(n-t)^{1-2/\alpha}$ and $h_{1}(t)=(t-1)(n-t-1)$.
If $\alpha\geq2$, noting that $g_{1}(t)$ and $h_{1}(t)$ are nonnegative, symmetric with respect to $t=n/2$, increasing in $[2,n/2]$ and decreasing in $[n/2,n-2]$, we have $f(n/2)=\max\{f(t): t\in [2, n-2]\}$.
Observe that
$$f(t)=\frac{1}{4}t^{2-2/\alpha}(n-t)^{2-2/\alpha}+\frac{1}{4}(1-n)t^{1-2/\alpha}(n-t)^{1-2/\alpha}.$$
Write $g_{2}(t)=\frac{1}{4}t^{2-2/\alpha}(n-t)^{2-2/\alpha}$ and $h_{2}(t)=\frac{1}{4}(1-n)t^{1-2/\alpha}(n-t)^{1-2/\alpha}$.
If $1<\alpha<2$,  noting that $g_{2}(t)$ and $h_{2}(t)$ are symmetric with respect to $t=n/2$,
 increasing on $[2,n/2]$ and decreasing on $[n/2,n-2]$,
we still have $f(n/2)=\max\{f(t): t\in [2, n-2]\}$.

In summary, for $\alpha>1$,
$$\lambda_{\alpha}(G)\leq 4!\times2^{-4/\alpha}f(\lfloor n/2\rfloor)\leq  4!\times2^{-4/\alpha}f(n/2),$$
or equivalently,
$$\lambda_{\alpha}(G)\leq \lambda_{\alpha}(B_{4}(n))\leq \frac{3}{8}(n-2)^{2}n^{2-4/\alpha}.$$
The result follows.
\end{proof}

Now let us focus on the pattern $P=(\{1,2\},\{\{1,1,2,2\}\})$.
Note that a $4$-graph is $P$-colorable is and only if it is  bipartite-like. Moreover, it is straightforward to verify that $B_4(n)$ attains the maximum number of edges among all bipartite-like $4$-graphs on $n$ vertices. By simple calculation, we have
\begin{equation}\label{B4n}
e(B_4(n))=\binom{\lfloor n/2\rfloor}{2}\binom{\lceil n/2\rceil}{2}=\frac{n^{4}-4n^{3}}{64}+O(n^{2}).
\end{equation}

\begin{thm}\label{PB4}
Let $(\mathcal{F},P)$ be a  Tur\'an pair with $P=(\{1,2\},\{\{1,1,2,2\}\})$, where $\mathcal{F}$ is a family of $4$-graphs that is degree-stable with respect to $\Col(P)$.
For any $\alpha>1$, there exists $n_{0}$ such that if $G$ is an $\mathcal{F}$-free $4$-graph $G$ on $n>n_{0}$ vertices, then $\lambda_{\alpha}(G)\leq\lambda_{\alpha}(B_{4}(n))$, with equality if and only if $G=B_{4}(n)$.
\end{thm}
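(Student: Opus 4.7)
The plan is to imitate the proof of Theorem \ref{PL}, with the pattern $P=(\{1,2\},\{\{1,1,2,2\}\})$ playing the role of $K^k_l$ and the extremal hypergraph $B_4(n)$ playing the role of $T^k_l(n)$. First, since $(\mathcal{F},P)$ is a Tur\'an pair and $B_4(n)$ is the unique edge-maximal $P$-colorable (i.e., bipartite-like) $4$-graph on $n$ vertices, we have $\ex(n,\mathcal{F})=e(B_4(n))$; combining with \eqref{B4n} yields $\pi(\mathcal{F})=\lim_{n\to\infty}e(B_4(n))/\binom{n}{4}=\tfrac{3}{8}$. Now fix $\e'\in(0,1)$ from the degree-stability hypothesis and set $\e:=\e'/2$. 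Let $\mathcal{G}_n$ be as in Theorem \ref{cri}. Then for $n$ large enough, every $G\in\mathcal{G}_n$ satisfies $\delta(G)\ge (1-\e)\pi(\mathcal{F})\binom{n}{3}\ge (\pi(\mathcal{F})/3!-\e')n^{3}$, so $G\in\Col(P)$, i.e., $G$ is bipartite-like.

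The next step is to show $B_4(n)\in\mathcal{G}_n$. A direct count gives $\delta(B_4(n))=(\lfloor n/2\rfloor-1)\binom{\lceil n/2\rceil}{2}=\tfrac{3}{8\cdot 3!}n^{3}+O(n^{2})=\tfrac{\pi(\mathcal{F})}{3!}n^{3}+O(n^{2})$, which exceeds $(1-\e)\pi(\mathcal{F})\binom{n}{3}$ for large $n$. Combining this with the bipartite-like property of every $G\in\mathcal{G}_n$, Lemma \ref{B4} then gives $\lambda_\alpha(\mathcal{G}_n)=\lambda_\alpha(B_4(n))$ for all sufficiently large $n$.

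The heart of the argument is verifying the growth condition \eqref{cri1}. Using \eqref{ave} for a lower bound on $\lambda_\alpha(B_4(n))$ and Lemma \ref{B4} for an upper bound on $\lambda_\alpha(B_4(n-1))$, together with \eqref{B4n}, I will estimate
\begin{align*}
\lambda_\alpha(\mathcal{G}_n)-\lambda_\alpha(\mathcal{G}_{n-1})
&\ge 4!\,e(B_4(n))\,n^{-4/\alpha}-\tfrac{3}{8}(n-3)^{2}(n-1)^{2-4/\alpha}\\
&= \tfrac{3}{8}n^{4-4/\alpha}+O(n^{3-4/\alpha-1})-\tfrac{3}{8}(n-1)^{4-4/\alpha}+O(n^{3-4/\alpha-1})\\
&= (4-4/\alpha)\,\pi(\mathcal{F})\,n^{3-4/\alpha}+o(n^{3-4/\alpha}),
\end{align*}
which is larger than $(k-k/\alpha)(1-\e')\pi(\mathcal{F})n^{k-k/\alpha-1}$ with $k=4$ once $n$ is sufficiently large. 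The main technical care point will be managing the two different polynomial expansions (one from the edge count, one from the Lemma \ref{B4} upper bound) so that the leading asymptotics agree and a clean inequality survives for large $n$; the rest is routine since both bounds have the same leading coefficient $\tfrac{3}{8}$.

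With \eqref{cri1} verified, Theorem \ref{cri} yields $\lambda_\alpha(G)\le \lambda_\alpha(\mathcal{G}_n)=\lambda_\alpha(B_4(n))$ for every $\mathcal{F}$-free $4$-graph $G$ on $n\ge n_0$ vertices. For the equality case, Theorem \ref{cri} forces $G\in\mathcal{G}_n$, so $G$ is bipartite-like; then applying Lemma \ref{B4} yields $G=B_4(n)$, completing the characterization.
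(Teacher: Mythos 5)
Your proposal follows essentially the same route as the paper's proof: identify $\mathcal{G}_n$ as a family of bipartite-like $4$-graphs containing $B_4(n)$, verify the growth condition \eqref{cri1} via \eqref{ave} and Lemma \ref{B4}, and invoke Theorem \ref{cri}. However, the middle line of your verification of \eqref{cri1} is not rigorous as written: you assert $4!\,e(B_4(n))\,n^{-4/\alpha}=\tfrac{3}{8}n^{4-4/\alpha}+O(n^{2-4/\alpha})$ and $\tfrac{3}{8}(n-3)^2(n-1)^{2-4/\alpha}=\tfrac{3}{8}(n-1)^{4-4/\alpha}+O(n^{2-4/\alpha})$, but both of these expansions actually carry an omitted term $-\tfrac{3}{2}n^{3-4/\alpha}$, which is $\Theta(n^{3-4/\alpha})$ --- exactly the order of the gain you are trying to establish, so a priori large enough to destroy the bound. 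The argument still succeeds only because the two $-\tfrac{3}{2}n^{3-4/\alpha}$ terms cancel; that cancellation is the content of the step and must be exhibited. The paper does this by expanding both quantities down to the $n^{3-4/\alpha}$ coefficient: $4!\,e(B_4(n))\,n^{-4/\alpha}=\tfrac{3}{8}\bigl(n^{4-4/\alpha}-4n^{3-4/\alpha}\bigr)+O(n^{2-4/\alpha})$ and $\tfrac{3}{8}(n-3)^2(n-1)^{2-4/\alpha}=\tfrac{3}{8}\bigl(n^{4-4/\alpha}-(8-4/\alpha)n^{3-4/\alpha}\bigr)+O(n^{2-4/\alpha})$, whence the difference is $\tfrac{3}{8}(4-4/\alpha)n^{3-4/\alpha}+O(n^{2-4/\alpha})=(4-4/\alpha)\pi(\mathcal{F})n^{3-4/\alpha}+o(n^{3-4/\alpha})$ as required. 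A minor cosmetic slip: your formula $\delta(B_4(n))=(\lfloor n/2\rfloor-1)\binom{\lceil n/2\rceil}{2}$ is the degree of a vertex in the smaller part and is the maximum degree when $n$ is odd; the minimum is $(\lceil n/2\rceil-1)\binom{\lfloor n/2\rfloor}{2}$, though the asymptotic $\tfrac{1}{16}n^3+O(n^2)$ is unaffected.
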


\begin{proof}
Since $(\mathcal{F},P)$ is a Tur\'an pair, by (\ref{B4n}) we have
$$\ex(n,\mathcal{F})=e(B_{4}(n))=\frac{n^{4}-4n^{3}}{64}+O(n^{2}),~\pi(\mathcal{F})=\lim\limits_{n\to \infty}\frac{e(B_{4}(n))}{\binom{n}{4}}=\frac{3}{8}.$$
By the degree-stability of $\mathcal{F}$ with respect to $\Col(P)$,  there exist $n_{0}$ and $\e \in(0,1)$ such that every $\mathcal{F}$-free $4$-graph $\mathcal{H}$ on $n\geq n_{0}$ vertices with $\delta(\mathcal{H})\geq (1/16-\e)n^{3}$ is contained in $\Col(P)$.
Define $\mathcal{G}_n$ as the family  of all $n$-vertex $\mathcal{F}$-free $4$-graphs $G$ with  $\delta(G) \ge \frac{3(1-\e)}{8}\binom{n}{3}$.  
For sufficiently large $n$ and any $G\in \mathcal{G}_n$, we have
$$\delta(G) \ge \frac{3(1-\e)}{8}\binom{n}{3}\geq \Big(\frac{1}{16}-\e\Big)n^{3},$$
which implies that $G$ is a member of $\Col(P)$ and hence $G$ is a bipartite-like $4$-graph.

By definition $B_{4}(n)$ is $\mathcal{F}$-free as it is $P$-colorable, and it holds 
$$\delta(B_{4}(n))\geq e(B_{4}(n))-e(B_{4}(n-1))=\frac{1}{16}n^{3}+O(n^{2}).$$
This implies $B_{4}(n)\in \mathcal{G}_n$.
By Lemma \ref{B4}, for sufficiently large $n$, we have
\begin{displaymath}
\begin{split}
\lambda_{\alpha}(\mathcal{G}_{n})-\lambda_{\alpha}(\mathcal{G}_{n-1})
\geq& \lambda_{\alpha}(B_{4}(n))-\frac{3}{8}(n-3)^{2}(n-1)^{2-4/\alpha}\\
\geq& 4!e(B_{4}(n))n^{-4/\alpha}-\frac{3}{8}\big(n^{4-4\alpha}-(8-4/\alpha)n^{3-4/\alpha}\big)+O(n^{2-4/\alpha})\\
\geq& \frac{3}{8}\big(n^{4-4/\alpha}-4n^{3-4/\alpha}\big)-\frac{3}{8}\big(n^{4-4\alpha}-(8-4/\alpha)n^{3-4/\alpha}\big)+O(n^{2-4/\alpha})\\
=& (4-4/\alpha)\pi(\mathcal{F})n^{3-4/\alpha}+o(n^{3-4/\alpha}).
\end{split}
\end{displaymath}
 The result now follows from Theorem \ref{cri} and Lemma \ref{B4}.
\end{proof}

\section{Applications}
In this section, we apply the results in Section $3$ to the spectral Tur\'an problems of $\mathcal{F}$-free hypergraphs for some special families $\mathcal{F}$.

\subsection{The expansion of complete graphs}
For a simple graph $G$ and an integer $k\geq 3$, the \emph{$k$-expansion of   $G$}, denoted by $G^{(k)}$, is the $k$-graph obtained from $G$ by enlarging each edge of $G$ with $k-2$ new vertices disjoint from $V(G)$ such that distinct edges of $G$ are enlarged by distinct vertices.
  In \cite{O2013}, Pikhurko proved that  $\EX(n,K^{(k)}_{l+1})=\{T^{k}_{l}(n)\}$ for any $l\geq k\geq3$ when $n$ is sufficiently large. So $(K^{(k)}_{l+1},K^{k}_{l})$ is a Tur\'an pair. Pikhurko also proved that $K^{(k)}_{l+1}$ is edge-stable with respect to  $\Col(K^{k}_{l})$ (see \cite[Lemma 3]{O2013}), and is also vertex-extendable (see Page 12 in \cite{HLL2023}).
  By Theorem \ref{edv}, we know that $K^{(k)}_{l+1}$ is degree-stable with respect to  $\Col(K^{k}_{l})$.
  Hence, by Theorem \ref{PL}, we obtain the following corollary.
  
\begin{cor}
For any $l\geq k\geq 3$ and $\alpha>1$, there exists $n_{0}$ such that if $G$ is a $K^{(k)}_{l+1}$-free $k$-graph on $n \ge n_{0}$ vertices, then $\lambda_{\alpha}(G)\leq\lambda_{\alpha}(T^{k}_{l}(n))$, with equality if and only if $G=T^{k}_{l}(n)$.
\end{cor}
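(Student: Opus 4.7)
The plan is to verify the hypotheses of Theorem~\ref{PL} for the singleton family $\mathcal{F}=\{K^{(k)}_{l+1}\}$ with pattern $K^{k}_{l}$, and then read off the conclusion. Concretely, I need to check that $(K^{(k)}_{l+1},K^{k}_{l})$ is a Tur\'an pair and that $K^{(k)}_{l+1}$ is degree-stable with respect to $\Col(K^{k}_{l})$.

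For the Tur\'an pair, I would invoke Pikhurko's exact Tur\'an result \cite{O2013}, which states $\EX(n,K^{(k)}_{l+1})=\{T^{k}_{l}(n)\}$ for all sufficiently large $n$. Every $K^{k}_{l}$-colorable $k$-graph is $l$-partite and hence $K^{(k)}_{l+1}$-free, because any copy of $K^{(k)}_{l+1}$ contains a copy of $K^{k}_{l+1}$ on its $l+1$ core vertices, and these would need to receive $l+1$ distinct colors. This gives one direction of the Tur\'an-pair condition; Pikhurko's theorem, which identifies the unique edge-maximum $K^{(k)}_{l+1}$-free $k$-graph as $T^{k}_{l}(n)$ (manifestly $K^{k}_{l}$-colorable), gives the other.

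For degree-stability, I would apply Theorem~\ref{edv}, which reduces degree-stability to the conjunction of edge-stability and vertex-extendability, using that $\Col(K^{k}_{l})$ is a hereditary class of $K^{(k)}_{l+1}$-free $k$-graphs. Edge-stability of $K^{(k)}_{l+1}$ with respect to $\Col(K^{k}_{l})$ is Pikhurko's stability lemma \cite[Lemma~3]{O2013}, and vertex-extendability is recorded in \cite{HLL2023}. With these two ingredients, Theorem~\ref{edv} delivers the required degree-stability.

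With both hypotheses of Theorem~\ref{PL} in place, the conclusion $\lambda_{\alpha}(G)\leq\lambda_{\alpha}(T^{k}_{l}(n))$, with equality if and only if $G=T^{k}_{l}(n)$, follows for all $n\geq n_{0}$. There is no real technical obstacle on our side: all of the deep content is imported through the cited results, and the only point requiring attention is to confirm that those external statements are phrased with the same normalizations for $\pi(\mathcal{F})$, minimum degree, and target hereditary class as used in Definitions and Theorem~\ref{PL} of this paper.
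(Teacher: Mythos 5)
Your route is the same as the paper's: establish that $(K^{(k)}_{l+1},K^{k}_{l})$ is a Tur\'an pair via Pikhurko's exact result, obtain degree-stability from edge-stability plus vertex-extendability through Theorem~\ref{edv}, and then invoke Theorem~\ref{PL}. One small misstatement in the detail you added to check the Tur\'an-pair condition: $K^{(k)}_{l+1}$ does \emph{not} contain $K^{k}_{l+1}$ on its $l+1$ core vertices, since each edge of the expansion contains exactly two core vertices (together with $k-2$ expansion vertices), whereas $K^{k}_{l+1}$ has edges consisting entirely of core vertices. What is true, and what your argument actually needs, is only that any two core vertices lie in a common edge; hence any homomorphism $\phi$ from a $k$-graph containing $K^{(k)}_{l+1}$ to $K^{k}_{l}$ would have to assign pairwise distinct values in $[l]$ to the $l+1$ core vertices, which is impossible. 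With that correction your verification is complete and the argument goes through exactly as in the paper.
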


\subsection{The  extension  of hypergraphs}
Let $k \ge 3$ and $F$ be a $k$-graph.
The \emph{extension} of $F$, denoted by $H^{F}$, is the $k$-graph obtained from $F$ by adding a $(k-2)$-set $B_{ij}$ of new vertices and an edge $\{v_{i},v_{j}\} \cup B_{ij}$ for each pair of vertices $v_i,v_j$ of $F$ that is not contained in any edge of $F$, and moreover, these $(k-2)$-sets of new vertices are pairwise disjoint.

The \emph{generalized $k$-fan}, denoted by $\text{Fan}^{k}$, is the extension of the $k$-graph on $k+1$ vertices with only one edge.
In \cite{MO2006}, Mubayi and Pikhurko proved that $\EX(n,\text{Fan}^{k})=\{T^{k}_{k}(n)\}$ for $k\geq3$ and sufficiently large $n$, and proved that Fan$^{k}$ is edge-stable with respect to $\Col(K^{k}_{k})$. 
So $(\text{Fan}^{k}, K^{k}_{k})$ is a Tur\'an pair.
It follows from Page $13$ in \cite{HLL2023} that $\text{Fan}^{k}$ is vertex-extendable.
So, by Theorem \ref{edv}, we know that $\text{Fan}^{k}$ is degree-stable with respect to $\Col(K^{k}_{k})$.
Therefore, by Theorem \ref{PL}, we obtain the following result.

\begin{cor}
For any $l\geq k\geq 3$ and $\alpha>1$, there exists $n_{0}$ such that if $G$ is a $\textup{Fan}^{k}$-free $k$-graph on $n \ge n_{0}$ vertices, then
$\lambda_{\alpha}(G)\leq\lambda_{\alpha}(T^{k}_{k}(n))$, with equality if and only if $G=T^{k}_{k}(n)$.
\end{cor}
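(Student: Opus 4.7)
The plan is to apply Theorem~\ref{PL} directly, taking $\mathcal{F} = \{\mathrm{Fan}^k\}$ and $l = k$; once its two hypotheses are checked, the conclusion follows at once.

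For the Tur\'an-pair condition $(\mathrm{Fan}^k, K^k_k)$, I would invoke Mubayi--Pikhurko~\cite{MO2006}, who showed that for all sufficiently large $n$ the unique edge-extremal $\mathrm{Fan}^k$-free $k$-graph on $n$ vertices is $T^k_k(n)$, which is $K^k_k$-colorable. Conversely, no $k$-partite $k$-graph can contain $\mathrm{Fan}^k$: in any proper $k$-coloring, the central edge of $\mathrm{Fan}^k$ already exhausts all $k$ colors, so the apex vertex would need a color distinct from each of these $k$ colors on every one of the $k$ auxiliary edges, which is impossible. Hence every $K^k_k$-colorable $k$-graph is $\mathrm{Fan}^k$-free, and the Tur\'an-pair condition is met.

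For degree-stability, the edge-stability of $\mathrm{Fan}^k$ with respect to $\Col(K^k_k)$ is proved in~\cite{MO2006}, and vertex-extendability is verified on page~13 of~\cite{HLL2023}. Since $\Col(K^k_k)$ is hereditary, Theorem~\ref{edv} converts these two properties into the required degree-stability. With both hypotheses of Theorem~\ref{PL} now in hand, the corollary follows immediately.

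There is no genuine obstacle in this argument: the substantive combinatorial inputs (edge-stability and vertex-extendability of $\mathrm{Fan}^k$) are already in the literature, and the spectral-to-combinatorial reduction is the content of Theorem~\ref{PL}. The proof therefore amounts to verifying that the pieces assembled in the preceding paragraph match the template of that theorem, with the only genuinely new work having been performed once and for all in Section~3.
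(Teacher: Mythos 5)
Your proposal is correct and follows exactly the route taken in the paper: verify that $(\mathrm{Fan}^k,K^k_k)$ is a Tur\'an pair via Mubayi--Pikhurko, obtain degree-stability from edge-stability plus vertex-extendability via Theorem~\ref{edv}, and then invoke Theorem~\ref{PL}. Your brief explicit check that $K^k_k$-colorable $k$-graphs cannot contain $\mathrm{Fan}^k$ is a small elaboration the paper leaves implicit, but the argument is otherwise identical.
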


 Let $M^{k}_{t}$  denote the $k$-graph consisting of $t$ vertex-disjoint edges, also called a \emph{$t$-matching}. 
 Let $S_{t}^{(k)}$ denote the $k$-expansion of a star $S_{t}$ with $t$ edges,  which is also called a \emph{$t$-hyperstar}.
 By Corollary $1.13$ and Concluding Remarks in \cite{LRM2023},  we see that the extension $H^{M^{3}_{t}}$ (respectively, $H^{S^{(3)}_{t}}$, $H^{S^{(4)}_{t}}$)
  is degree-stable with respect to  $\Col(K_{3t-1}^{3})$(respectively,  $\Col(K_{2t}^{3})$, $\Col(K_{3t}^{4})$) for $t\geq 2$.
  By Theorems 5.1-5.3 in \cite{JPW2018},  for $t\geq2$ and sufficiently large $n$,
  $$\EX(n, H^{M^{3}_{t}})=\{T^{3}_{3t-1}(n)\},~\EX(n, H^{S^{(3)}_{t}})=\{T_{2t}^{3}(n)\},~\EX(n, H^{S^{(4)}_{t}})=\{T_{3t}^{4}(n)\}.$$
  So $(H^{M^{3}_{t}},K^{3}_{3t-1})$, $(H^{S^{(3)}_{t}},K_{2t}^{3})$
   and $(H^{S^{(4)}_{t}},K_{3t}^{4})$ are all Tur\'an pairs.
   By Theorem \ref{PL}, we get the following spectral extremal results immediately.

\begin{cor}
For any $\alpha>1$ and $t\geq 2$, there exists $n_{0}$ such that if $G$ is a $H^{M^{3}_{t}}$-free $3$-graph on $n \ge n_{0}$ vertices, then
$\lambda_{\alpha}(G)\leq\lambda_{\alpha}(T^{3}_{3t-1}(n))$, with equality if and only if $G=T^{3}_{3t-1}(n)$.
\end{cor}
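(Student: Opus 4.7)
The plan is to invoke Theorem \ref{PL} directly, with $\mathcal{F}=\{H^{M^{3}_{t}}\}$, $k=3$, and $l=3t-1$. All the hypotheses of Theorem \ref{PL} have just been verified in the paragraph preceding the statement, so the proof is a one-line deduction.

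More explicitly, I would first cite \cite{JPW2018} (Theorems 5.1--5.3) to assert that $\EX(n,H^{M^{3}_{t}})=\{T^{3}_{3t-1}(n)\}$ for sufficiently large $n$ and $t\ge 2$. Since $T^{3}_{3t-1}(n)$ is $K^{3}_{3t-1}$-colorable (indeed it \emph{is} the balanced complete $(3t-1)$-partite $3$-graph) and every $K^{3}_{3t-1}$-colorable $3$-graph is $(3t-1)$-partite, hence $H^{M^{3}_{t}}$-free, the pair $(H^{M^{3}_{t}},K^{3}_{3t-1})$ is a Tur\'an pair in the sense defined in Section~2. Next I would quote the degree-stability statement already extracted from \cite{LRM2023}: $H^{M^{3}_{t}}$ is degree-stable with respect to $\Col(K^{3}_{3t-1})$. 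With both conditions of Theorem \ref{PL} in hand, applying it for a fixed $\alpha>1$ yields an $n_{0}$ such that every $H^{M^{3}_{t}}$-free $3$-graph $G$ on $n\ge n_{0}$ vertices satisfies $\lambda_{\alpha}(G)\leq\lambda_{\alpha}(T^{3}_{3t-1}(n))$, with equality if and only if $G=T^{3}_{3t-1}(n)$.

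There is essentially no obstacle here, since the heavy lifting has been done in Theorem \ref{PL} (which reduces spectral extremality to degree-stability via the spectral stability Theorem \ref{cri} and the multipartite upper bound Theorem \ref{kpt}), in the combinatorial Tur\'an result of \cite{JPW2018}, and in the degree-stability result of \cite{LRM2023}. The only thing one might want to spell out carefully in the write-up is the verification that $(H^{M^{3}_{t}},K^{3}_{3t-1})$ is a Tur\'an pair, to make the dependence on the external reference transparent; but this is immediate from the fact that $T^{3}_{3t-1}(n)$ is both $K^{3}_{3t-1}$-colorable and the unique extremal $H^{M^{3}_{t}}$-free $3$-graph.
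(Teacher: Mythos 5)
Your proposal is correct and matches the paper's own proof step for step: cite \cite{JPW2018} for $\EX(n, H^{M^{3}_{t}})=\{T^{3}_{3t-1}(n)\}$ to obtain the Tur\'an pair, cite \cite{LRM2023} for degree-stability of $H^{M^{3}_{t}}$ with respect to $\Col(K^{3}_{3t-1})$, and then apply Theorem~\ref{PL} with $k=3$, $l=3t-1$. The one point you flag as worth spelling out, namely that every $(3t-1)$-partite $3$-graph is $H^{M^{3}_{t}}$-free, is equally implicit in the paper; it follows since a $(3t-1)$-partite copy of $H^{M^{3}_{t}}$ would force $T^{3}_{3t-1}(n)$ itself to contain $H^{M^{3}_{t}}$ for large $n$, contradicting the cited extremal result.
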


\begin{cor}
For any $\alpha>1$ and $t\geq 2$, there exists $n_{0}$ such that if $G$ is a $H^{S^{(3)}_{t}}$-free $3$-graph on $n \ge n_{0}$ vertices, then
$\lambda_{\alpha}(G)\leq\lambda_{\alpha}(T^{3}_{2t}(n))$, with equality if and only if $G=T^{3}_{2t}(n)$.
\end{cor}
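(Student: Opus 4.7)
The plan is to invoke Theorem~\ref{PL} with $\mathcal{F} = \{H^{S^{(3)}_t}\}$, $k = 3$, and $l = 2t$; its conclusion is then exactly the statement of the corollary, so the entire task reduces to verifying the two standing hypotheses of that theorem, namely that $(H^{S^{(3)}_t},K^3_{2t})$ is a Tur\'an pair and that $H^{S^{(3)}_t}$ is degree-stable with respect to $\Col(K^3_{2t})$.

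For the Tur\'an-pair condition I would appeal to Theorem~5.2 of \cite{JPW2018}, which states that $\EX(n,H^{S^{(3)}_t}) = \{T^3_{2t}(n)\}$ for $t \ge 2$ and all sufficiently large $n$; this delivers the ``edge-maximum $\mathcal{F}$-free implies $P$-colorable'' half of the definition. The reverse direction amounts to showing that $H^{S^{(3)}_t}$ is not $2t$-partite. This follows from a short chromatic computation: writing the star $S_t$ as center $c$ with leaves $l_1,\dots,l_t$ and the extension vertices of the star edges as $w_1,\dots,w_t$, the three vertices of each edge $\{c,l_i,w_i\}$ of $S^{(3)}_t$ are forced to be in distinct color classes, and the extension edges covering the uncovered pairs $\{l_i,l_j\}$, $\{w_i,w_j\}$, and $\{l_i,w_j\}$ with $i\neq j$ further force all of $c,l_1,\dots,l_t,w_1,\dots,w_t$ to lie in pairwise distinct color classes; since these are $2t+1$ vertices, no $2t$-coloring can exist. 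Consequently every $K^3_{2t}$-colorable $3$-graph is $H^{S^{(3)}_t}$-free. The degree-stability hypothesis, in turn, is exactly the content of Corollary~1.13 together with the concluding remarks of \cite{LRM2023}, as already recorded in the paragraph immediately preceding the corollary.

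With both hypotheses of Theorem~\ref{PL} verified, the theorem applies verbatim and yields $\lambda_{\alpha}(G) \le \lambda_{\alpha}(T^3_{2t}(n))$ with equality if and only if $G = T^3_{2t}(n)$, as required. The real mathematical content---the exact Tur\'an result in \cite{JPW2018}, the degree-stability in \cite{LRM2023}, and the general spectral-stability criterion of Theorem~\ref{cri} underlying Theorem~\ref{PL}---has been done upstream, so no serious obstacle remains in the present proof; it is an essentially routine bookkeeping application that parallels the derivations of the preceding $H^{M^3_t}$-free and $K^{(k)}_{l+1}$-free corollaries.
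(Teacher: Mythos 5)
Your proposal is correct and follows the same route as the paper: invoke Theorem~\ref{PL} after citing Theorem~5.2 of \cite{JPW2018} for the exact Tur\'an result and Corollary~1.13 with the concluding remarks of \cite{LRM2023} for degree-stability. The only added content is your explicit chromatic verification that $H^{S^{(3)}_t}$ is not $K^3_{2t}$-colorable (needing $2t+1$ colors on $c,l_1,\dots,l_t,w_1,\dots,w_t$), a step the paper leaves implicit when asserting that $(H^{S^{(3)}_{t}},K_{2t}^{3})$ is a Tur\'an pair.
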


\begin{cor}
For any $\alpha>1$ and $t\geq 2$, there exists $n_{0}$ such that if $G$ is a $H^{S^{(4)}_{t}}$-free $4$-graph on $n \ge n_{0}$ vertices, then
$\lambda_{\alpha}(G)\leq\lambda_{\alpha}(T^{4}_{3t}(n))$, with equality if and only if $G=T^{4}_{3t}(n)$.
\end{cor}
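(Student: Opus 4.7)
The plan is to apply Theorem \ref{PL} with $k=4$ and $l=3t$, following the same template used in the immediately preceding corollaries. So the argument reduces to verifying the two hypotheses of Theorem \ref{PL}: first, that $(H^{S^{(4)}_{t}}, K^{4}_{3t})$ is a Tur\'an pair, and second, that $H^{S^{(4)}_{t}}$ is degree-stable with respect to $\Col(K^{4}_{3t})$.

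For the Tur\'an pair property, I would invoke Theorem 5.3 of \cite{JPW2018}, which asserts $\EX(n, H^{S^{(4)}_{t}}) = \{T^{4}_{3t}(n)\}$ for $t \ge 2$ and sufficiently large $n$. Since $T^{4}_{3t}(n)$ is $K^{4}_{3t}$-colorable and is the unique edge-maximum $H^{S^{(4)}_{t}}$-free $4$-graph, the pair $(H^{S^{(4)}_{t}}, K^{4}_{3t})$ satisfies the definition of a Tur\'an pair: every $K^{4}_{3t}$-colorable $4$-graph is a sub-hypergraph of some complete $3t$-partite $4$-graph and therefore $H^{S^{(4)}_{t}}$-free, while every edge-maximum $H^{S^{(4)}_{t}}$-free $4$-graph coincides with $T^{4}_{3t}(n)$ and is hence $K^{4}_{3t}$-colorable.

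For degree-stability, I would cite Corollary 1.13 together with the Concluding Remarks of \cite{LRM2023}, which provides the needed degree-stability of the extension $H^{S^{(4)}_{t}}$ with respect to $\Col(K^{4}_{3t})$ for $t \ge 2$. With both hypotheses verified, Theorem \ref{PL} with the parameters $k=4$, $l=3t$ directly yields an $n_{0}$ such that every $H^{S^{(4)}_{t}}$-free $4$-graph $G$ on $n \ge n_{0}$ vertices satisfies $\lambda_{\alpha}(G) \le \lambda_{\alpha}(T^{4}_{3t}(n))$, with equality if and only if $G = T^{4}_{3t}(n)$.

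The main obstacle is not in the corollary itself, which is essentially a formal consequence of the two cited results, but rather lies in the external inputs: one must rely on the combinatorial Tur\'an result from \cite{JPW2018} and the degree-stability framework of \cite{LRM2023}. The spectral machinery required (verifying the growth condition \eqref{cri1} and running the spectral stability argument) has already been packaged inside Theorem \ref{PL}, so no additional calculation analogous to Lemma \ref{B4} is needed here, in contrast to the bipartite-like case handled by Theorem \ref{PB4}.
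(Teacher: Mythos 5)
Your proposal is correct and mirrors the paper's own argument: the paper establishes that $(H^{S^{(4)}_{t}},K^{4}_{3t})$ is a Tur\'an pair via Theorem~5.3 of \cite{JPW2018}, obtains degree-stability with respect to $\Col(K^{4}_{3t})$ from Corollary~1.13 and the Concluding Remarks of \cite{LRM2023}, and then applies Theorem~\ref{PL} with $k=4$, $l=3t$, exactly as you do.
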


\subsection{Cancellative hypergraphs and generalized triangles}
A $k$-graph $G$ is called \emph{cancellative} if whenever $A$, $B$, $C$ are edges of $G$ with $A\cup B=A\cup C$ (or equivalently, $B \Delta C\subseteq A$, where $\Delta$ denotes the symmetric difference), we have $B=C$. In particular, a graph $G$ is cancellative if and only if it is triangle-free.

The Tur\'an problems of cancellative hypergraphs are closely related to the generalized triangles.
The \emph{generalized triangle} $T_{k}$ is the $k$-graph with vertex set $[2k-1]$ and edge set
$$\{\{1,\ldots,k-1,k\},\{1,\ldots,k-1,k+1\},\{k,k+1,\ldots,2k-1\}\}.$$
Note that a $3$-graph is cancellative if and only if it is $\{F_{4},T_{3}\}$-free, where $F_{4}$ is a hypergraph with edge set $\{\{1,2,3\},\{1,2,4\},\{1,3,4\}\}$.
Bollob\'as \cite{B} showed that $\EX(n,\{F_{4},T_{3}\})=\{T^{3}_{3}(n)\}$. Subsequently, Frankl and F\"{u}redi \cite{FF1983} proved that $\EX(n,T_{3})=\{T^{3}_{3}(n)\}$ for all $n\geq3000$, and this was improved to $n\geq33$ by Keevash and Mubayi \cite{KM2004}.
In \cite{O2008}, Pikhurko proved that $\EX(n,T_{4})=\{T^{4}_{4}(n)\}$ for sufficiently large $n$. 
So $(T_{3},K^{3}_{3})$ and  $(T_{4},K^{4}_{4})$ are both Tur\'an pairs. 
From Theorem 1.10 in \cite{LRM2023}, we know that
 $T_{3}$ (respectively, $T_{4}$) is degree-stable with respect to  $\Col(K^{3}_{3})$ (respectively, $\Col(K^{4}_{4})$).
 Therefore, by Theorem \ref{PL}, we have the following result.

\begin{cor}
For any $\alpha>1$ and $k\in \{3,4\}$, there exists $n_{0}$ such that if $G$ is a $T_{k}$-free $k$-graph on $n \ge n_{0}$ vertices, then
$\lambda_{\alpha}(G)\leq\lambda_{\alpha}(T^{k}_{k}(n))$, with equality if and only if $G=T^{k}_{k}(n)$.
\end{cor}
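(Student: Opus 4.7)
The plan is to verify the two hypotheses of Theorem \ref{PL} for $\mathcal{F}=\{T_k\}$ and $l=k$, with $k\in\{3,4\}$, and then invoke that theorem directly. The only substantive work is bookkeeping: we must confirm both that $(T_k,K^k_k)$ is a Tur\'an pair and that $T_k$ is degree-stable with respect to $\Col(K^k_k)$. Once these are in place, the conclusion $\lambda_{\alpha}(G)\leq\lambda_{\alpha}(T^k_k(n))$ with equality only for $G=T^k_k(n)$ is immediate from Theorem \ref{PL}.

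For the Tur\'an pair condition, first observe that $T^k_k(n)$ is $K^k_k$-colorable (in fact $k$-partite by construction), and every $k$-partite $k$-graph is $T_k$-free, so $\Col(K^k_k)$ is contained in the class of $T_k$-free $k$-graphs. To check that every edge-maximum $T_k$-free $k$-graph is $K^k_k$-colorable, we cite: for $k=3$, the result of Frankl and F\"uredi \cite{FF1983} (extended to $n\geq 33$ by Keevash and Mubayi \cite{KM2004}) yields $\EX(n,T_3)=\{T^3_3(n)\}$; for $k=4$, Pikhurko's result \cite{O2008} gives $\EX(n,T_4)=\{T^4_4(n)\}$ for $n$ large. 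Since $T^k_k(n)\in\Col(K^k_k)$, this identifies $(T_k,K^k_k)$ as a Tur\'an pair in both cases.

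For degree-stability, we appeal to Theorem 1.10 of \cite{LRM2023}, which states precisely that $T_3$ (respectively $T_4$) is degree-stable with respect to $\Col(K^3_3)$ (respectively $\Col(K^4_4)$). Note that $\Col(K^k_k)$ is hereditary, so this fits the framework of Theorem \ref{PL} without modification.

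With both hypotheses verified, Theorem \ref{PL} applies and yields, for each $\alpha>1$ and $k\in\{3,4\}$, an $n_0$ such that every $T_k$-free $k$-graph $G$ on $n\geq n_0$ vertices satisfies $\lambda_{\alpha}(G)\leq\lambda_{\alpha}(T^k_k(n))$, with equality if and only if $G=T^k_k(n)$. There is no real obstacle here since all the hard analytic work—establishing the spectral stability criterion (Theorem \ref{cri}), reducing to $k$-partite hypergraphs, and exploiting Theorem \ref{kpt}—is packaged inside Theorem \ref{PL}; the one potential pitfall is simply checking that the cited stability statement from \cite{LRM2023} is phrased relative to exactly the same class $\Col(K^k_k)$ used in our hypothesis, which it is.
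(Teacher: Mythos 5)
Your proposal matches the paper's proof: both verify that $(T_k, K^k_k)$ is a Tur\'an pair via the extremal results of Frankl--F\"uredi/Keevash--Mubayi (for $k=3$) and Pikhurko (for $k=4$), cite Theorem 1.10 of \cite{LRM2023} for degree-stability with respect to $\Col(K^k_k)$, and then invoke Theorem \ref{PL}. The argument is correct and there is no meaningful difference from the paper's own deduction.
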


Since cancellative $k$-graphs must be $T_{k}$-free, we immediately obtain the following corollary, which implies the result in \cite[Theorem $3.2$]{NLK2022}.

\begin{cor}
For any $\alpha>1$ and $k\in \{3,4\}$, there exists $n_{0}$ such that if $G$ is a cancellative $k$-graph on $n \ge n_{0}$ vertices, then 
$\lambda_{\alpha}(G)\leq\lambda_{\alpha}(T^{k}_{k}(n))$, with equality if and only if $G=T^{k}_{k}(n)$.
\end{cor}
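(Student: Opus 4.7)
The plan is to deduce this corollary directly from the preceding one (the $T_k$-free result for $k\in\{3,4\}$). The key observation, already noted just above the statement, is that every cancellative $k$-graph is automatically $T_k$-free. Once this containment of classes is established, the spectral bound and the uniqueness of the extremal hypergraph transfer verbatim, since $T^{k}_{k}(n)$ is itself cancellative (being $k$-partite, any three edges $A,B,C$ with $A\cup B=A\cup C$ cannot fail $B=C$, as the color classes force each edge to be determined by its multiset of colors plus one representative per class).

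First I would verify the inclusion ``cancellative $\Rightarrow$ $T_k$-free.'' The generalized triangle $T_k$, as defined in the excerpt, has vertex set $[2k-1]$ and edges $A=\{k,k+1,\dots,2k-1\}$, $B=\{1,\dots,k-1,k\}$, $C=\{1,\dots,k-1,k+1\}$. A direct calculation gives $B\triangle C=\{k,k+1\}\subseteq A$, which means $A\cup B = A\cup C$ while $B\ne C$. Thus any $k$-graph containing $T_k$ as a sub-hypergraph fails the cancellative property on the corresponding triple of edges. Contrapositively, a cancellative $k$-graph has no $T_k$ sub-hypergraph.

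Next, let $G$ be a cancellative $k$-graph on $n\ge n_0$ vertices, where $n_0$ is the threshold furnished by the preceding corollary for $T_k$-free $k$-graphs. Since $G$ is $T_k$-free, that corollary immediately yields
\[
\lambda_{\alpha}(G)\le \lambda_{\alpha}(T^{k}_{k}(n)),
\]
with equality if and only if $G=T^{k}_{k}(n)$. To close the argument I would only need to remark that $T^{k}_{k}(n)$ is itself cancellative, so it lies in the class under consideration and the equality case is actually realized; this is clear since in a $k$-partite $k$-graph two edges agree in a given color class iff they share that vertex, and the condition $A\cup B=A\cup C$ forces $B$ and $C$ to share every color with $A$ or with each other in a way that pins down $B=C$.

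There is essentially no obstacle: the whole content of the corollary is the containment of cancellative $k$-graphs in the class of $T_k$-free $k$-graphs, which is a one-line structural check on the three edges of $T_k$. The spectral work has already been done in the previous corollary, which in turn rests on Theorem \ref{PL} applied to the Tur\'an pairs $(T_3,K^{3}_{3})$ and $(T_4,K^{4}_{4})$ together with the degree-stability cited from \cite{LRM2023}.
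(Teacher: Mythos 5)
Your proof is correct and follows exactly the paper's route: the paper deduces the corollary in one line by noting that cancellative $k$-graphs are $T_k$-free, which is precisely your argument. Your additional checks (verifying $B\triangle C\subseteq A$ for the three edges of $T_k$, and that $T^k_k(n)$ is itself cancellative so the equality case is attainable) are valid and simply make the paper's implicit reasoning explicit.
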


\subsection{4-book with three pages}
Let $F_{7}$ denote the $4$-graph with vertex set $\{1,2,\ldots,7\}$ and edge set
$\{\{1234\},\{1235\},\{1236\},\{4567\}\}$, also called \emph{a $4$-book with three pages}.
F\"{u}redi, Pikhurko, and Simonovits \cite{FOS2006} proved that $\EX(n,F_{7})=\{B_{4}(n)\}$ for sufficiently large $n$. 
So $(F_{7},P)$ is a Tur\'an pair, where $P=(\{1,2\},\{\{1,1,2,2\}\})$.
They also proved that $F_{7}$ is degree-stable with respect to  $\Col(P)$.
Hence, by Theorem \ref{PB4}, we obtain the following corollary.

\begin{cor}
For any $\alpha>1$, there exists $n_{0}$ such that if $G$ is a  $F_{7}$-free $4$-graph on $n \ge n_{0}$ vertices, then
$\lambda_{\alpha}(G)\leq\lambda_{\alpha}(B_{4}(n))$, with equality if and only if $G=B_{4}(n)$.
\end{cor}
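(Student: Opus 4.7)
The plan is to apply Theorem \ref{PB4} directly to the singleton family $\mathcal{F}=\{F_{7}\}$ with the pattern $P=(\{1,2\},\{\{1,1,2,2\}\})$. Both hypotheses required by Theorem \ref{PB4} are already laid out in the paragraph preceding the statement and are consequences of the work of F\"uredi, Pikhurko, and Simonovits \cite{FOS2006}: first, their result $\EX(n,F_{7})=\{B_{4}(n)\}$ for sufficiently large $n$, combined with the observation (noted just before Theorem \ref{PB4}) that $B_{4}(n)$ is the unique edge-maximum bipartite-like (equivalently, $P$-colorable) $4$-graph on $n$ vertices, shows that $(F_{7},P)$ is a Tur\'an pair; second, they prove that $F_{7}$ is degree-stable with respect to $\Col(P)$.

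With these two inputs verified, Theorem \ref{PB4} applies verbatim and yields the conclusion: for every $\alpha>1$ there exists $n_{0}$ such that any $F_{7}$-free $4$-graph $G$ on $n\ge n_{0}$ vertices satisfies $\lambda_{\alpha}(G)\le\lambda_{\alpha}(B_{4}(n))$, with equality if and only if $G=B_{4}(n)$. The proof is therefore a single invocation and does not require reopening the arguments from Section~3.

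There is no real obstacle to surmount here, since all of the substantive work has been carried out upstream: the spectral stability criterion (Theorem \ref{cri}) extracts the extremal graph from the $\mathcal{F}$-free class under the growth condition on $\lambda_{\alpha}(\mathcal{G}_{n})$; Lemma \ref{B4} supplies the sharp bipartite-like bound $\lambda_{\alpha}(B_{4}(n))\le \tfrac{3}{8}(n-2)^{2}n^{2-4/\alpha}$ used to verify that growth condition in the proof of Theorem \ref{PB4}; and the combinatorial Tur\'an and degree-stability facts for $F_{7}$ are imported from \cite{FOS2006}. The only care required is to record that the family $\mathcal{F}=\{F_{7}\}$ is a valid input, i.e., that $F_{7}$ is itself a $4$-graph and that $\pi(\{F_{7}\})=3/8$, which is immediate from $\ex(n,F_{7})=e(B_{4}(n))$ together with \eqref{B4n}.
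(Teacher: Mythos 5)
Your proof is correct and follows the paper's approach exactly: import the facts $\EX(n,F_{7})=\{B_{4}(n)\}$ and the degree-stability of $F_{7}$ with respect to $\Col(P)$ from F\"uredi--Pikhurko--Simonovits, confirm that $(F_{7},P)$ is a Tur\'an pair, and apply Theorem \ref{PB4}. The extra commentary about what Theorem \ref{PB4} uses internally is accurate but not needed.
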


\section{Proof of Theorem \ref{cri}} \label{Sec-5}
In this section, we establish the proof of Theorem \ref{cri}. We begin by presenting several preliminary lemmas derived under the assumptions and notations of Theorem \ref{cri}.

\begin{fact}\label{fact0}
    For  $p > -1$ and sufficiently large $n$, 
    $$\sum_{i=1}^{n} i^{p} = (1 + o(1)) \frac{n^{p+1}}{p+1}.$$
\end{fact}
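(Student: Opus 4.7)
The plan is to compare the partial sum $\sum_{i=1}^n i^p$ with the antiderivative integral $\int_0^n x^p\,dx = n^{p+1}/(p+1)$, which is finite precisely because the hypothesis $p > -1$ prevents divergence at the left endpoint. Since $x^p$ is strictly monotone on $(0,\infty)$, a standard Riemann-sum sandwich yields the asymptotic directly, and the two regimes $p \geq 0$ and $-1 < p < 0$ are handled by symmetric arguments with the roles of the two bounding rectangles swapped.

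In the case $p \geq 0$, the function $x^p$ is non-decreasing, so for each $i \geq 1$ we have $\int_{i-1}^i x^p\,dx \leq i^p \leq \int_i^{i+1} x^p\,dx$. Summing over $i \in \{1,\ldots,n\}$ and evaluating the integrals gives
$$\frac{n^{p+1}}{p+1} \;\leq\; \sum_{i=1}^n i^p \;\leq\; \frac{(n+1)^{p+1}-1}{p+1}.$$
Expanding $(n+1)^{p+1} = n^{p+1}(1+1/n)^{p+1} = n^{p+1}(1 + O(1/n))$ shows that the upper bound equals $(1+o(1))\,n^{p+1}/(p+1)$, matching the lower bound.

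In the case $-1 < p < 0$, the function $x^p$ is strictly decreasing on $(0,\infty)$, so the inequalities reverse to $\int_i^{i+1} x^p\,dx \leq i^p \leq \int_{i-1}^i x^p\,dx$, where the right-hand inequality for $i=1$ is the valid assertion $1 = 1^p \leq 1/(p+1) = \int_0^1 x^p\,dx$. Summing over $i \in \{1,\ldots,n\}$ yields
$$\frac{(n+1)^{p+1}-1}{p+1} \;\leq\; \sum_{i=1}^n i^p \;\leq\; \frac{n^{p+1}}{p+1}.$$
Since $p+1 > 0$, we have $n^{p+1} \to \infty$, so both bounds are $(1+o(1))\,n^{p+1}/(p+1)$, and the squeeze delivers the claimed asymptotic. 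There is no substantive obstacle; the only point of care is to use $p+1 > 0$ both to ensure $\int_0^n x^p\,dx$ is finite and to conclude that the leading term dominates the $O(1)$ additive corrections. Combining the two cases completes the proof.
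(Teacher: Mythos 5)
Your proof is correct and follows essentially the same integral-comparison argument as the paper, splitting into the cases $p \geq 0$ and $-1 < p < 0$ according to the monotonicity of $x^p$. The only cosmetic difference is in the decreasing case: the paper compares the sum against $\int_1^n x^p\,dx$ and adds the endpoint terms $f(1)$ and $f(n)$ by hand, whereas you use the convergent improper integral $\int_0^n x^p\,dx$ directly; both give the same asymptotic.
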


\begin{proof}
    The result holds trivially for $p=0$.
    Consider the function $f(x) = x^p$, which is increasing on the interval $[0, n+1]$ for $p>0$, and is decreasing on the interval $[1,n]$ for $-1 < p <0$. 
    
   \textbf{Case 1}: $ p > 0$.  In this case, we have 
    $$\int_{0}^{n} x^p  dx \leq \sum_{i=1}^{n} i^p \leq \int_{1}^{n+1} x^p  dx.$$
    By a direct calculation,
    $$\frac{n^{p+1}}{p+1} \leq \sum_{i=1}^{n} i^p \leq \frac{(n+1)^{p+1} - 1}{p+1},$$
    which implies the desired result.
    
    \textbf{Case 2}: $-1 < p < 0$.  We have 
      $$\int_{1}^{n} x^p  dx + f(n) \leq \sum_{i=1}^{n} i^p \leq \int_{1}^{n} x^p  dx + f(1),$$
    where $ f(1) = 1 $, $ f(n) = n^p $.  
    Therefore,
    $$\frac{n^{p+1} - 1}{p+1} + n^p \leq \sum_{i=1}^{n} i^p \leq \frac{n^{p+1} - 1}{p+1} + 1.$$
    The result follows.
\end{proof}

\begin{fact}\label{fact}
 If $0\leq x<1$ and $\beta>0$, then $(1-x)^{-\beta}\geq 1+\beta x$.
\end{fact}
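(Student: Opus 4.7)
The plan is to prove this elementary inequality by a standard one-variable calculus argument. Define
\[
f(x) = (1-x)^{-\beta} - (1 + \beta x), \qquad x \in [0,1).
\]
My goal is to show $f(x) \geq 0$ on $[0,1)$. The strategy is to check that $f$ vanishes at the left endpoint and is non-decreasing on the interval.

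First I would note the boundary value $f(0) = 1 - 1 - 0 = 0$. Then I would differentiate to obtain
\[
f'(x) = \beta(1-x)^{-\beta-1} - \beta = \beta\bigl[(1-x)^{-\beta-1} - 1\bigr].
\]
Since $0 \leq x < 1$, we have $0 < 1-x \leq 1$, and because the exponent $-\beta-1$ is negative (as $\beta > 0$), raising a number in $(0,1]$ to this exponent yields a value $\geq 1$. Hence $f'(x) \geq 0$ for every $x \in [0,1)$, so $f$ is non-decreasing. Combined with $f(0) = 0$, this gives $f(x) \geq 0$, which is the desired inequality $(1-x)^{-\beta} \geq 1 + \beta x$.

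There is essentially no obstacle here; this is a textbook Bernoulli-type inequality. If one preferred to avoid calculus entirely, an equally short alternative would be to use the generalized binomial expansion
\[
(1-x)^{-\beta} = \sum_{k=0}^{\infty} \binom{\beta+k-1}{k} x^k,
\]
valid for $|x|<1$, and observe that every coefficient $\binom{\beta+k-1}{k}$ is nonnegative when $\beta > 0$, so truncating the series after the linear term $1 + \beta x$ only discards nonnegative contributions. Either route is routine; the calculus version is shorter and fits the style of the surrounding preliminary lemmas.
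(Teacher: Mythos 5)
Your proof is correct: the derivative computation is right, the sign analysis on $(0,1)$ is valid, and the boundary check $f(0)=0$ closes the argument. The paper states this as a Fact without proof (treating it as a standard Bernoulli-type inequality), so there is no authorial argument to compare against; your calculus route, as well as the binomial-series alternative you sketch, is a perfectly appropriate way to fill that gap.
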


\begin{fact} \label{fact1}If  $0<x< \frac{1}{2}$, then $1-x\geq e^{-x-x^{2}}$.
\end{fact}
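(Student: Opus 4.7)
The plan is to reduce the inequality to a single-variable calculus exercise by taking logarithms. The inequality $1-x\geq e^{-x-x^{2}}$ is equivalent to $\ln(1-x)\geq -x-x^{2}$, so I would reformulate it as showing that the function
$$f(x) := \ln(1-x) + x + x^{2}$$
is nonnegative on $[0,1/2)$. Since $f(0)=0$, it suffices to show that $f$ is nondecreasing on this interval.

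Computing the derivative,
$$f'(x) = -\frac{1}{1-x} + 1 + 2x = \frac{(1+2x)(1-x)-1}{1-x} = \frac{x(1-2x)}{1-x}.$$
For $x\in[0,1/2)$ every factor in the last expression is nonnegative (and strictly positive on the open interval $(0,1/2)$), so $f'(x)\geq 0$ on $[0,1/2)$. Combined with $f(0)=0$, this yields $f(x)\geq 0$ throughout $[0,1/2)$, and exponentiating recovers the claimed inequality.

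There is essentially no obstacle here; this is a routine elementary estimate, and the only care needed is to verify the sign of the numerator $x(1-2x)$ in $f'(x)$. An alternative route would be to expand $-\ln(1-x) = \sum_{n\geq 1} x^{n}/n$ as a power series and bound the tail $\sum_{n\geq 3} x^{n}/n$ by a geometric series against $x^{2}/2$, but the monotonicity argument via $f'$ is shorter and avoids any truncation estimate.
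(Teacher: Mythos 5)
Your proof is correct. The paper states this as a bare Fact with no proof supplied, treating it as an elementary estimate left to the reader; your monotonicity argument (showing $f(x)=\ln(1-x)+x+x^2$ has $f(0)=0$ and $f'(x)=\frac{x(1-2x)}{1-x}\geq 0$ on $[0,1/2)$) is exactly the standard derivation one would give, and the algebra checks out.
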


\begin{fact} \label{fact2} If  $x>1$, then $\frac{1}{x}<\ln x-\ln(x-1)$ and $\frac{1}{x^{2}}<\frac{1}{x-1}-\frac{1}{x}$.
\end{fact}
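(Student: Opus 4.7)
The plan is to prove both inequalities by elementary means, since each reduces to a standard monotonicity argument or a one-line algebraic identity. No serious obstacle is anticipated.

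For the first inequality, the natural move is to rewrite the right-hand side as an integral:
\[
\ln x - \ln(x-1) \;=\; \int_{x-1}^{x} \frac{dt}{t}.
\]
Since $x>1$, the interval $[x-1,x]$ lies in $(0,\infty)$, where $1/t$ is strictly decreasing and positive. Hence for every $t\in[x-1,x)$ one has $1/t > 1/x$, and this strict inequality holds on a set of positive measure. Integrating therefore gives
\[
\int_{x-1}^{x} \frac{dt}{t} \;>\; \int_{x-1}^{x} \frac{dt}{x} \;=\; \frac{1}{x},
\]
which is exactly the first claim.

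For the second inequality, an algebraic computation is the cleanest route. A direct calculation yields
\[
\frac{1}{x-1}-\frac{1}{x} \;=\; \frac{x-(x-1)}{x(x-1)} \;=\; \frac{1}{x(x-1)}.
\]
Because $x>1$, both $x-1$ and $x$ are positive with $x-1<x$, so $x(x-1)<x^{2}$ and consequently $\frac{1}{x(x-1)}>\frac{1}{x^{2}}$, which is the desired inequality. (Equivalently, one could repeat the integral trick using $\int_{x-1}^{x} t^{-2}\,dt = \frac{1}{x-1}-\frac{1}{x}$ and apply strict monotonicity of $1/t^{2}$ in the same way.)

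Both steps are completely routine, and the only thing worth emphasizing is why the inequalities are strict: the integrands $1/t$ and $1/t^{2}$ are non-constant on $[x-1,x]$, so the corresponding integrals strictly exceed the value of the integrand at the right endpoint times the length of the interval. Assembling these two short arguments completes the proof of Fact~\ref{fact2}.
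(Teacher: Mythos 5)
Your proof is correct. The paper states Fact~\ref{fact2} without any proof, treating it as an elementary observation, so there is no argument to compare against; your two arguments (the integral comparison for the logarithmic inequality and the algebraic simplification $\frac{1}{x-1}-\frac{1}{x}=\frac{1}{x(x-1)}>\frac{1}{x^2}$ for the second) are both valid and are the standard ways to verify these bounds.
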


\begin{lem}\label{mind0}
If $n$ is sufficiently large, then 
$\lambda_{\alpha}(\mathcal{G}_{n})\geq \pi(\mathcal{F})(1-2\e')n^{k-k/\alpha}.$  
\end{lem}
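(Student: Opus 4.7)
The plan is to iterate the hypothesized recursion \eqref{cri1} from the starting index $N$ up to $n$. Telescoping the inequality $\lambda_{\alpha}(\mathcal{G}_i) - \lambda_{\alpha}(\mathcal{G}_{i-1}) \ge (k-k/\alpha)(1-\varepsilon')\pi(\mathcal{F})\, i^{k-k/\alpha-1}$ over $i = N, N+1, \dots, n$ will yield
\[
\lambda_{\alpha}(\mathcal{G}_n) \;\ge\; \lambda_{\alpha}(\mathcal{G}_{N-1}) + (k-k/\alpha)(1-\varepsilon')\pi(\mathcal{F}) \sum_{i=N}^{n} i^{\,k-k/\alpha-1}.
\]
I would then discard the (nonnegative) constant $\lambda_{\alpha}(\mathcal{G}_{N-1})$, whose value depends only on $N$ and is irrelevant for an asymptotic lower bound in $n$.

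To estimate the sum, I would invoke Fact~\ref{fact0}. Since $\alpha > 1$ and $k \ge 2$, the exponent $p := k - k/\alpha - 1$ is strictly greater than $-1$, so Fact~\ref{fact0} gives $\sum_{i=1}^{n} i^{p} = (1+o(1))\, n^{k-k/\alpha}/(k-k/\alpha)$. Removing the $O(1)$ initial terms $i = 1,\dots,N-1$ is absorbed into the $(1+o(1))$ factor. Substituting this asymptotic back into the telescoped inequality produces
\[
\lambda_{\alpha}(\mathcal{G}_n) \;\ge\; (1-\varepsilon')(1+o(1))\,\pi(\mathcal{F})\, n^{k-k/\alpha}.
\]

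For the final step I would use that $\varepsilon'$ is a fixed positive constant while the $o(1)$ error tends to $0$; hence, for $n$ sufficiently large, $(1-\varepsilon')(1+o(1)) \ge 1 - 2\varepsilon'$, which gives the claimed bound. I do not anticipate any substantive obstacle here. The only points requiring attention are verifying that $p > -1$ so Fact~\ref{fact0} is applicable (immediate from $\alpha>1$ and $k \ge 2$), and confirming that $\lambda_{\alpha}(\mathcal{G}_{N-1})$ is a well-defined nonnegative spectral radius (which is implicit in the hypothesis \eqref{cri1}, since the recursion presupposes $\mathcal{G}_i \neq \emptyset$ for every $i \ge N-1$). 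The entire lemma is essentially an asymptotic bookkeeping consequence of the telescoping recursion guaranteed by \eqref{cri1}.
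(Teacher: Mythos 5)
Your proposal is correct and follows essentially the same route as the paper: telescope inequality \eqref{cri1} from $N$ to $n$, invoke Fact~\ref{fact0} (noting $k-k/\alpha-1 > -1$ since $\alpha>1$, $k\ge 2$), absorb the finitely many boundary terms, and pass from $(1-\e')(1+o(1))$ to $1-2\e'$ for large $n$.
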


\begin{proof}
By (\ref{cri1}) and Fact \ref{fact0},  for sufficiently large  $n\geq N$ we have
\begin{align*}
\begin{split}
\lambda_{\alpha}(\mathcal{G}_{n})
&=\sum_{i=N}^{n}(\lambda_{\alpha}(\mathcal{G}_{i}) -\lambda_{\alpha}(\mathcal{G}_{i-1}))+\lambda_{\alpha}(\mathcal{G}_{N-1})\\
&\geq \sum_{i=N}^{n}
(k-k/\alpha)\pi(\mathcal{F})(1-\e')i^{k-k/\alpha-1}+\lambda_{\alpha}(\mathcal{G}_{N-1})\\
&= \sum_{i=1}^{n}
(k-k/\alpha)\pi(\mathcal{F})(1-\e')i^{k-k/\alpha-1}+O(1)\\
&=(1+o(1))\pi(\mathcal{F})(1-\e')n^{k-k/\alpha}\\
& \geq \pi(\mathcal{F})(1-2\e')n^{k-k/\alpha}.
\end{split}
\end{align*}
\end{proof}

Let $H$ be an $n$-vertex $\mathcal{F}$-free $k$-graph, and  let $\mathbf{x}=(x_{1},\ldots,x_{n})$ be a nonnegative eigenvector corresponding to $\lambda_{\alpha}(H)$.
Note that by definition $\mathbf{x}$ has unit length with respect to $\ell_\alpha$-norm.
Define $x_{\min}=\min\{x_v: v \in V(H)\}$. 
For a subset $U\subseteq V(H)$, denote $x_{U}:=\Pi_{v\in U}x_{v}$.
The following two lemmas mainly discuss the spectral property of $H$.

\begin{lem}\label{mind1}
Let $\e''=\e\pi(\mathcal{F})/(2(k-1))$.
If $\lambda_{\alpha}(H)\geq \lambda_{\alpha}(\mathcal{G}_{n})$ and $\delta(H) < (1-\e)\pi(F)\binom{n}{k-1}$, then for sufficiently large $n$, 
$$x_{\min}^{\alpha}<\frac{1-\e''}{n}.$$
\end{lem}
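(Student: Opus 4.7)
The plan is to let $u$ be a vertex realizing $d_H(u) = \delta(H)$. Since $x_{\min} \le x_u$, it suffices to prove $x_u^\alpha < (1-\e'')/n$.

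The starting point is the eigenvector equation
\[
\lambda_\alpha(H)\, x_u^{\alpha-1} \;=\; (k-1)! \sum_{e \ni u} \prod_{v \in e \setminus \{u\}} x_v.
\]
I would upper-bound the right-hand side by applying AM--GM to each edge product,
\[
\prod_{v \in e\setminus\{u\}} x_v \;\le\; \left(\frac{1}{k-1}\sum_{v \in e\setminus\{u\}} x_v^\alpha\right)^{(k-1)/\alpha},
\]
and then summing over edges through $u$. For $\alpha \ge k-1$, Jensen's inequality (concavity of $t^{(k-1)/\alpha}$) converts the resulting sum into $d_H(u)^{1-(k-1)/\alpha}$ times the $(k-1)/\alpha$-th power of an average, which the codegree bound $d_{uv}(H) \le \binom{n-2}{k-2}$ and the normalization $\sum_v x_v^\alpha = 1$ control by $\binom{n-2}{k-2}/(k-1)$. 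For $1<\alpha<k-1$, the parallel route uses $\prod_v x_v \le \tfrac{1}{k-1}\sum_v x_v^{k-1}$ and $\sum_v x_v^{k-1}\le 1$ (since $x_v\le 1$). Either regime yields an inequality of the form
\[
(k-1)!\sum_{e \ni u} \prod_{v \in e\setminus\{u\}} x_v \;\le\; C_1\, d_H(u)^{\,1-(k-1)/\alpha}\, n^{(k-1)(k-2)/\alpha}\,(1+o(1))
\]
with an explicit $C_1 = C_1(\alpha,k)$.

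Inserting $d_H(u) < (1-\e)\pi(\mathcal{F})\binom{n}{k-1}$ and the lower bound $\lambda_\alpha(H)\ge (1-2\e')\pi(\mathcal{F})n^{k-k/\alpha}$ provided by Lemma~\ref{mind0}, the powers of $n$ telescope to $-(\alpha-1)/\alpha$, so that raising to the $\alpha/(\alpha-1)$-th power gives
\[
x_u^\alpha \;\le\; C_2\, n^{-1}\,(1+o(1)),
\]
where $C_2$ is an explicit expression in $\e,\,\e',\,\pi(\mathcal{F}),\,\alpha,\,k$. The final step is to verify $C_2\le 1-\e''$ for the specified $\e''=\e\pi(\mathcal{F})/(2(k-1))$ and $\e'=\e\pi(\mathcal{F})(\alpha-1)/(2k\alpha)$: a calculus check using Bernoulli's inequality (Fact~\ref{fact}) together with Facts~\ref{fact1}--\ref{fact2} shows that the leading $O(\e)$ correction in $C_2$ dominates $\e''$ once $\e$ is small, and the $o(1)$ error is absorbed for $n$ large. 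Once $x_u^\alpha<(1-\e'')/n$ is established, the conclusion $x_{\min}^\alpha\le x_u^\alpha<(1-\e'')/n$ is automatic.

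The main obstacle I anticipate is making the constant $C_2$ genuinely smaller than $1-\e''$. The H\"older/AM--GM route above carries an undesirable factor of the form $\pi(\mathcal{F})^{-(k-1)/(\alpha-1)}$, coming from the trivial codegree bound $d_{uv}(H)\le \binom{n-2}{k-2}$; this factor can exceed $1$ whenever $\pi(\mathcal{F})<1$. To absorb it one typically has to use a sharper average codegree estimate at $u$, either extracted from the lower bound on $\lambda_\alpha(H)$ itself together with the degree-sum identity $\sum_v d_H(v) = k\,e(H)$, or obtained by a separate case analysis on the three regimes $\alpha>k$, $\alpha=k$, and $1<\alpha<k$. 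The bookkeeping is elementary but delicate, and is presumably where Facts~\ref{fact0}--\ref{fact2} enter.
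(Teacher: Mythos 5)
Your plan has a genuine gap, and you have in fact already put your finger on exactly where it breaks: the direct bound on $x_u^\alpha$ inevitably carries a factor $\pi(\mathcal{F})^{-1/(\alpha-1)} > 1$, and neither of the remedies you float (degree-sum identity, case analysis on the regime of $\alpha$) produces the missing factor of $\pi(\mathcal{F})$. The paper's proof supplies this factor by a mechanism your plan omits entirely: it argues by contradiction, assuming $x_{\min}^\alpha \ge (1-\e'')/n$, and then \emph{uses that assumption to lower-bound the contribution of the non-edges at $u$}. Concretely, the small-degree hypothesis $\delta(H) < (1-\e)\pi(\mathcal{F})\binom{n}{k-1}$ guarantees at least $\bigl(1-(1-\e)\pi(\mathcal{F})\bigr)\binom{n}{k-1}$ non-edges through $u$, and under the contradiction assumption each contributes at least $x_{\min}^{\alpha(k-1)} \ge ((1-\e'')/n)^{k-1}$. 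Subtracting this from the Maclaurin bound $\sum_{S\in\binom{V}{k-1}} x_S^\alpha \le \binom{n}{k-1}/n^{k-1}$ on the \emph{total} sum, and then invoking Bernoulli with the specific choice $\e'' = \e\pi(\mathcal{F})/(2(k-1))$, collapses the edge sum to $\sum_{e\ni u} x_{e\setminus\{u\}}^\alpha \le \pi(\mathcal{F})\binom{n}{k-1}/n^{k-1}$ --- a savings of exactly a factor $\pi(\mathcal{F})$ over the na\"ive bound you are working with.

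Once that sharpened bound is in hand, the rest of the comparison is clean: one pairs $(\lambda x_{\min}^{\alpha-1}/(k-1)!)^\alpha \le \delta^{\alpha-1}\sum_{e\ni u} x_{e\setminus\{u\}}^\alpha$ (a direct H\"older step, not the edge-by-edge AM--GM/Jensen route you propose) with the lower bound $\lambda \ge (1-2\e')\pi(\mathcal{F}) n^{k-k/\alpha}$ and $x_{\min}^{\alpha-1} \ge ((1-\e'')/n)^{(\alpha-1)/\alpha}$, and after cancelling $\pi(\mathcal{F})^\alpha \binom{n}{k-1}^\alpha/n^{k-1}$ from both sides one is left with $(1-2\e')^\alpha(1-\e'')^{\alpha-1} \le (1-\e)^{\alpha-1}$, which Bernoulli shows is incompatible with $\e' = \e\pi(\mathcal{F})(\alpha-1)/(2k\alpha) \le \e(\alpha-1)/(4\alpha)$. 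The point to internalize is that the smallness hypothesis on $\delta(H)$ is not merely something to plug in at the end --- it is an active ingredient that, in conjunction with a lower bound on $x_{\min}$, tells you that a large portion of the "mass" in $\sum_S x_S^\alpha$ lives on non-edges and is therefore unavailable to the edge sum. A direct (non-contradiction) bound on $x_u^\alpha$, as you attempt, never gets to use that information and so cannot reach the target constant $1-\e''$ when $\pi(\mathcal{F}) < 1$, which is exactly the nondegenerate setting.
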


\begin{proof}
Let  $V:=V(H)$, $\lambda:=\lambda_{\alpha}(H)$, $\delta:=\delta(H)$,  and  let $u\in V$ be a vertex with degree $\delta$. By eigenequation  at  vertex  $u$,
$$\lambda x_{\min}^{\alpha-1}\leq \lambda x^{\alpha-1}_{u}=(k-1)!\sum_{e\in E_{H}(u)}x_{e\backslash\{u\}}.$$
 Applying H\"older's inequality, we have
\begin{equation}\label{e1}
\bigg(\frac{\lambda x_{\min}^{\alpha-1}}{(k-1)!}\bigg)^{\alpha}\leq \delta^{\alpha-1}\sum_{e\in E_{H}(u)}x_{e\backslash\{u\}}^{\alpha}.
\end{equation}
We estimate the right-hand sum above as follows:
\begin{align}\label{e2}
\begin{split}
\sum_{e\in E_{H}(u)}x_{e \backslash\{u\}}^{\alpha}
&=\sum_{S\in \binom{V}{k-1}}x_{S}^{\alpha}-\sum_{T\in \binom{V}{k-1} \text{~and~} T\cup \{u\} \notin E_{H}(u) }x_{T}^{\alpha}\\
&\leq \sum_{S\in \binom{V}{k-1}}x_{S}^{\alpha}-\sum_{T\in \binom{V}{k-1} \text{~and~} T\cup \{u\} \notin E_{H}(u) }x_{\min}^{\alpha(k-1)}\\
&=\sum_{S\in \binom{V}{k-1}}x_{S}^{\alpha}-\left(\tbinom{n}{k-1}-\delta\right)x_{\min}^{\alpha(k-1)}.
\end{split}
\end{align}
By Maclaurin's inequality, we have
\begin{equation}\label{e3}
\sum_{S\in \binom{V}{k-1}}x_{S}^{\alpha}\leq \binom{n}{k-1}\Bigg(\frac{1}{n}\sum_{i\in V}x_{i}^{\alpha}\Bigg)^{k-1}=
\frac{\binom{n}{k-1}}{n^{k-1}}.
\end{equation}

Assume to the contrary that $x_{\min}^{\alpha}\geq\frac{1-\e''}{n}$. Combining (\ref{e2}), (\ref{e3}) and the assumption $\delta(H)< (1-\e)\pi(F)\binom{n}{k-1}$, we have
\begin{align*}
\begin{split}
\sum_{e\in E_{H}(u)}x_{e \backslash\{u\}}^{\alpha}
&\leq\frac{\binom{n}{k-1}}{n^{k-1}}-\big(1-(1-\e)\pi(\mathcal{F})\big)\tbinom{n}{k-1}x_{\min}^{\alpha(k-1)}\\
&\leq\frac{\binom{n}{k-1}}{n^{k-1}}-\big(1-(1-\e)\pi(\mathcal{F})\big)\tbinom{n}{k-1}\frac{(1-\e'')^{k-1}}{n^{k-1}}\\
&=\frac{\binom{n}{k-1}}{n^{k-1}}\Big(1-\big(1-(1-\e)\pi(\mathcal{F})\big)\big(1-\e''\big)^{k-1}\Big).
\end{split}
\end{align*}
 By  Bernoulli's inequality and the definition of $\e''$,   we have
 \begin{displaymath}
\begin{split}
(1-\e'')^{k-1}\geq 1-(k-1)\e'' = 1-\e \pi(\mathcal{F})/2 \geq 1-\e \pi(\mathcal{F}).
\end{split}
\end{displaymath}
Therefore,
\begin{align}\label{e5}
\begin{split}
\sum_{e\in E_{H}(u)}x_{e \backslash\{u\}}^{\alpha}
&\leq \frac{\binom{n}{k-1}}{n^{k-1}}\Big(1-\big(1-(1-\e)\pi(\mathcal{F})\big)\big(1-\e \pi(\mathcal{F})\big)\Big)\\
&= \frac{\binom{n}{k-1}}{n^{k-1}}\Big(\pi(\mathcal{F})-\e(1-\e)\pi(\mathcal{F})^{2}\Big)\\
&\leq\frac{\pi(\mathcal{F})\binom{n}{k-1}}{n^{k-1}}.
\end{split}
\end{align}
Combining this inequality with (\ref{e1}), we have
\begin{align}\label{new11}
\begin{split}
\bigg(\frac{\lambda x_{\min}^{\alpha-1}}{(k-1)!}\bigg)^{\alpha}
&\leq \frac{\pi(\mathcal{F})\binom{n}{k-1}}{n^{k-1}}\bigg((1-\e)\pi(\mathcal{F})\binom{n}{k-1}\bigg)^{\alpha-1}\\
&\leq \frac{(1-\e)^{\alpha-1}\pi(\mathcal{F})^{\alpha}n^{(k-1)(\alpha-1)}}{((k-1)!)^{\alpha}}.
\end{split}
\end{align}

However,  by Lemma \ref{mind0},  for sufficiently large  $n$ we have
$$\lambda\geq\lambda_{\alpha}(\mathcal{G}_{n})
\geq \pi(\mathcal{F})(1-2\e')n^{k-k/\alpha}.$$
This gives the lower bound:
\begin{align}\label{new22}
\begin{split}
\bigg(\frac{\lambda x_{\min}^{\alpha-1}}{(k-1)!}\bigg)^{\alpha}
&\geq \frac{(1-\e'')^{\alpha-1}(1-2\varepsilon')^{\alpha}\pi(\mathcal{F})^{\alpha}n^{(k-1)(\alpha-1)}}{((k-1)!)^{\alpha}}.
\end{split}
\end{align}
Combining (\ref{new11}) and (\ref{new22}) and noting that $\e''\leq\e/2$, 
we have
$$1-2\e'\leq \bigg(\frac{1-\e}{1-\e''}\bigg)^{\frac{\alpha-1}{\alpha}}
\leq\bigg(\frac{1-\e}{1-\e/2}\bigg)^{\frac{\alpha-1}{\alpha}}<\bigg(1-\frac{\e}{2}\bigg)^{\frac{\alpha-1}{\alpha}}.$$
Thus, by Bernoulli's inequality, 
$$2\e'>1-\bigg(1-\frac{\e}{2}\bigg)^{\frac{\alpha-1}{\alpha}}
\ge 1-\bigg(1-\frac{\e (\alpha-1)}{2 \alpha}\bigg)=
\frac{\e(\alpha-1)}{2\alpha}.$$
This contradicts the fact that $$\e'=\frac{\e\pi(\mathcal{F})(\alpha-1)}{2k\alpha}\leq \frac{\e(\alpha-1)}{4\alpha}.$$ This completes the proof of Lemma \ref{mind1}.
\end{proof}

\begin{lem}\label{mind2}
Let  $\e''=\e \pi(\mathcal{F})/(2(k-1))$. If $\lambda_{\alpha}(H)\geq \lambda_{\alpha}(\mathcal{G}_{n})$ and $v$ is a vertex of $H$ such that $x_{v}^{\alpha}<(1-\e'')/n$,  then for sufficiently large $n$,
$$\lambda_{\alpha}(H-v)\geq(1-(k-k/\alpha)(1-\e''/2)n^{-1})\lambda_{\alpha}(H)$$
and
$$\lambda_{\alpha}(H-v)> \lambda_{\alpha}(\mathcal{G}_{n-1}).$$
\end{lem}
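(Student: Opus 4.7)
The plan is to pass from the eigenvector $\mathbf{x}$ of $H$ to a test vector on $H-v$ and apply the Rayleigh characterization of $\lambda_\alpha$. I would set $\mathbf{y}$ to be the restriction of $\mathbf{x}$ to $V(H)\setminus\{v\}$; since $\|\mathbf{x}\|_\alpha=1$, we have $\|\mathbf{y}\|_\alpha^\alpha=1-x_v^\alpha$. Splitting $P_H(\mathbf{x})=\lambda_\alpha(H)$ into edges through $v$ and edges avoiding $v$, and using the eigenequation at $v$ (which, after multiplying by $x_v$, yields $k!\sum_{e\in E_H(v)}x_e=k\lambda_\alpha(H)x_v^\alpha$), I would obtain $P_{H-v}(\mathbf{y})=\lambda_\alpha(H)(1-kx_v^\alpha)$. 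Homogeneity of $P_{H-v}$ then delivers the key estimate
\[
\lambda_\alpha(H-v)\;\ge\;\frac{P_{H-v}(\mathbf{y})}{\|\mathbf{y}\|_\alpha^{k}}\;=\;\lambda_\alpha(H)\cdot\frac{1-kx_v^\alpha}{(1-x_v^\alpha)^{k/\alpha}}.
\]

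For the first inequality, I would analyse the scalar function $f(t):=(1-kt)(1-t)^{-k/\alpha}$ on $t\in[0,(1-\e'')/n)$. A quick derivative computation confirms that $f$ is decreasing on this interval (its unique critical point, when it exists, is bounded below by a positive constant independent of $n$), so $f(x_v^\alpha)\ge f((1-\e'')/n)$. Applying Fact~\ref{fact} with $\beta=k/\alpha$ and expanding gives
\[
f(t)\;\ge\;(1-kt)\bigl(1+(k/\alpha)t\bigr)\;=\;1-(k-k/\alpha)t-(k^2/\alpha)t^2.
\]
Evaluating at $t=(1-\e'')/n$ produces a linear term $-(k-k/\alpha)(1-\e'')/n$ together with a quadratic correction of size $O(1/n^2)$. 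The slack between $(1-\e'')$ and the target $(1-\e''/2)$ supplies a positive term of order $\e''/n$ that comfortably absorbs the $O(1/n^2)$ correction, completing the first inequality for all sufficiently large $n$.

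For the second inequality, I would combine the first with the hypothesis $\lambda_\alpha(H)\ge\lambda_\alpha(\mathcal{G}_n)$ and the growth assumption~\eqref{cri1}, which rearranged reads $\lambda_\alpha(\mathcal{G}_{n-1})\le\lambda_\alpha(\mathcal{G}_n)-(k-k/\alpha)(1-\e')\pi(\mathcal{F})n^{k-k/\alpha-1}$. It therefore suffices to prove
\[
\bigl(1-f(x_v^\alpha)\bigr)\,\lambda_\alpha(\mathcal{G}_n)\;<\;(k-k/\alpha)(1-\e')\pi(\mathcal{F})\,n^{k-k/\alpha-1}.
\]
The same expansion yields $1-f(x_v^\alpha)\le(k-k/\alpha)(1-\e'')/n+O(1/n^2)$, and Lemma~\ref{sed} together with $\lambda_\alpha(\mathcal{G}_n)\le\spex(n,\mathcal{F})$ gives $\lambda_\alpha(\mathcal{G}_n)\le(1+o(1))\pi(\mathcal{F})n^{k-k/\alpha}$. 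Substituting, for large $n$ the desired strict inequality reduces to the numerical condition $\e'<\e''$, which unfolds as $(\alpha-1)(k-1)<k\alpha$, equivalently $k+\alpha>1$; this always holds since $k\ge 2$ and $\alpha>1$.

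The main obstacle I expect is the bookkeeping: the constants $\e'$ and $\e''$ are calibrated precisely so that the $O(1/n^2)$ loss in the expansion of $f$ does not erode the $\Theta(1/n)$ gap furnished by~\eqref{cri1}. Keeping the two-stage estimate---first the scalar lower bound on $f$, then substitution into the growth inequality---tight enough to leave room for the strict comparison $\e'<\e''$ is the delicate part of the argument.
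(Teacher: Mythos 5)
Your proposal is correct and follows essentially the same route as the paper: restrict the eigenvector to $V(H)\setminus\{v\}$, use the eigenequation at $v$ to compute $P_{H-v}$, apply the Rayleigh quotient and Fact~\ref{fact} to obtain the ratio bound $\lambda_\alpha(H-v)/\lambda_\alpha(H)\ge (1-kx_v^\alpha)(1+(k/\alpha)x_v^\alpha)$, and then feed the growth hypothesis~\eqref{cri1} together with Lemma~\ref{sed} into that bound. The only cosmetic difference is that you argue monotonicity of $f(t)=(1-kt)(1-t)^{-k/\alpha}$ directly by the derivative, whereas the paper applies Fact~\ref{fact} first and then uses the (trivial) monotonicity of the resulting quadratic; and your reduction of the second inequality to the clean numerical condition $\e'<\e''$ (equivalently $k+\alpha>1$) is a tidy reformulation of the same comparison the paper carries out.
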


\begin{proof}
Let $\mathbf{x}'$ be a sub-vector of $\mathbf{x}$ only by removing the component $x_{v}$. 
For the $k$-graph $H-v$, we have
$$P_{H-v}(\mathbf{x}')=\lambda_{\alpha}(H)-k!x_{v}\sum_{e\in E_{H}(v)}x_{e \backslash\{v\}}=
\lambda_{\alpha}(H)-k\lambda_{\alpha}(H)x_{v}^{\alpha}.$$
Note that 
$P_{H-v}(\mathbf{x}')\leq \lambda_{\alpha}(H-v)(\|\mathbf{x}' \|_{\alpha})^{k}$. 
Since  $x_{v}^{\alpha}<(1-\e'')/n$, by Fact \ref{fact} we have 
\begin{align}\label{e7}
\begin{split}
\frac{\lambda_{\alpha}(H-v)}{\lambda_{\alpha}(H)}&\geq \frac{1-kx_{v}^{\alpha}}{(1-x_{v}^{\alpha})^{k/\alpha}}\\
&\geq (1-kx_{v}^{\alpha})(1+kx_{v}^{\alpha}/\alpha)\\
&=1-(k-k/\alpha)x_{v}^{\alpha}-k^{2}x_{v}^{2\alpha}/\alpha\\
&\geq 1-\frac{(k-k/\alpha)(1-\e'')}{n}
-\frac{k^{2}(1-\e'')^{2}}{\alpha n^{2}},
\end{split}
\end{align}
which implies that for sufficiently large $n$, 
$$\lambda_{\alpha}(H-v)\geq(1-(k-k/\alpha)(1-\e''/2)n^{-1})\lambda_{\alpha}(H).$$

Note that  $H$ is an $n$-vertex $\mathcal{F}$-free $k$-graph with $\lambda_{\alpha}(H)\geq \lambda_{\alpha}(\mathcal{G}_{n})$. 
For sufficiently large $n$, by Theorem \ref{cri}  we have
\begin{equation}\label{replace1}
\lambda_{\alpha}(H)\geq \lambda_{\alpha}(\mathcal{G}_{n}) \ge  \lambda_{\alpha}(\mathcal{G}_{n-1}) + (k-k/\alpha)\pi(\mathcal{F})(1-\e')n^{k-k/\alpha-1}
\end{equation}
and by Lemma \ref{sed},
\begin{equation}\label{replace2}
\lambda_{\alpha}(\mathcal{G}_{n-1})\leq(1+o(1))\pi(\mathcal{F})(n-1)^{k-k/\alpha}\leq(1+o(1))\pi(\mathcal{F})n^{k-k/\alpha}.
\end{equation}
Substituting (\ref{replace1}) and (\ref{replace2}) into (\ref{e7}), we derive
\begin{align*}
\begin{split}
\lambda_{\alpha}(H-v)\geq &\lambda_{\alpha}(\mathcal{G}_{n-1})+(k-k/\alpha)\pi(\mathcal{F})(1-\e')n^{k-k/\alpha-1}\\
&-(k-k/\alpha)\pi(\mathcal{F})(1-\e'')(1+o(1))n^{k-k/\alpha-1}+O(n^{k-k/\alpha-2})\\
\geq&\lambda_{\alpha}(\mathcal{G}_{n-1})+\e(1-1/\alpha)\pi(\mathcal{F})^2 (2(k-1))^{-1} n^{k-k/\alpha-1}+o(n^{k-k/\alpha-1})\\
> &\lambda_{\alpha}(\mathcal{G}_{n-1}),
\end{split}
\end{align*}
where the second equality follows from 
\begin{align*}
(1-\e')-(1-\e'')(1+o(1)) &= \e''- \e' +o(1) \\
& =\e\pi(\mathcal{F})\left( \frac{1}{2(k-1)} - \frac{1}{2k}\left(1-\frac{1}{\alpha}\right)\right) + o(1)\\
& \ge \e\pi(\mathcal{F}) \left( \frac{1}{2(k-1)} - \frac{1}{2k} \right)\\
& = \e \pi(\mathcal{F}) \frac{1}{2k(k-1)}.
\end{align*}
This completes the proof of Lemma \ref{mind2}.
\end{proof}

Finally, we will finish the proof Theorem \ref{cri}.
 
\begin{proof}[{\bf Proof of Theorem \ref{cri}}]
Let $H$ be an $n$-vertex $\mathcal{F}$-free $k$-graph such that $\lambda_{\alpha}(H)\geq\lambda_{\alpha}(\mathcal{G}_{n})$.  
We aim to  show that for sufficiently $n$, $H \in \mathcal{G}_n$, or equivalently, $\delta(H) \ge (1-\e)\pi(F)\binom{n}{k-1}$.
We may assume that $N_{0}$ is sufficiently large to apply Lemmas
 \ref{mind0}, \ref{mind1} and \ref{mind2} for $n\geq N_{0}$. 
 Define 
 $$n_{0}=\bigg(\frac{N_{0}^{k-k/\alpha}\textup{e}^{k^{2}}}{(1-2\e')\pi(\mathcal{F})}\bigg)^{\frac{2}{(k-k/\alpha)\e''}}.$$ 
 Clearly, we have $n_{0}>N_{0}$.  

We assert that when $n \ge n_0$, $H$ has a sub-hypergraph $G$ on $m$ vertices with $\delta(G) \ge (1-\e)\pi(F)\binom{m}{k-1}$ and $m >N_0$.
 The idea is that we can keep removing the vertex of minimum value given by the nonnegative eigenvector associated with the $\alpha$-spectral radius, and then we must eventually get the sub-hypergraph $G$ as wanted.
Suppose this does not give us a suitable sub-hypergraph even after we have got $N_0$ vertices left. 
This means we can  find a sequence of $k$-graphs:
$$H=H_{n} \supseteq  H_{n-1} \supseteq  \cdots \supseteq H_{N_0},$$
where, for each $i>N_0$,  $H_i$ has $i$ vertices with $\delta(H_{i}) < (1-\e)\pi(F)\binom{i}{k-1}$,  and $H_{i-1}=H_i -u_{i}$,  where the vertex $u_{i}\in V(H_{i})$ satisfies $x_{u_{i}}=\min\{x_v: v \in V(H_i)\}$, and $\mathbf{x}$ is a nonnegative eigenvector for $\lambda_{\alpha}(H_{i})$.

By Lemma \ref{mind1}, if $\lambda_{\alpha}(H_{i}) \ge  \lambda_{\alpha}(\mathcal{G}_{i})$ and $\delta(H_{i}) < (1-\e)\pi(F)\binom{i}{k-1}$, then $x_{u_i}^{\alpha}<(1-\e'')/i$; 
 and  by Lemma \ref{mind2}, 
\begin{equation}\label{temp1}
\lambda_{\alpha}(H_{i-1})\geq \lambda_{\alpha}(H_{i})(1-(k-\alpha/k)(1-\e''/2)i^{-1}),
\end{equation}
and
\begin{equation} \label{temp2}
\lambda_{\alpha}(H_{i-1})> \lambda_{\alpha}(\mathcal{G}_{i-1}).
\end{equation}
We note that equation \eqref{temp2} guarantees the repeated application of Lemmas \ref{mind1} and \ref{mind2} such that both equations \eqref{temp1} and \eqref{temp2} hold for all $i > N_0$.

By \eqref{temp1}, we have 
\begin{align*}
\lambda_{\alpha}(H_{N_{0}})
\geq& \lambda_{\alpha}(H_{N_{0}+1})\bigg(1-\frac{(k-k/\alpha)(1-\e''/2)}{N_{0}+1}\bigg)\\
\geq& \lambda_{\alpha}(H_{n})\prod_{i=N_{0}+1}^{n}\bigg(1-\frac{(k-k/\alpha)(1-\e''/2)}{i}\bigg)\\
\geq& \lambda_{\alpha}(H_{n})\exp\Bigg(-\sum_{i=N_{0}+1}^{n}\bigg(\frac{(k-k/\alpha)(1-\e''/2)}{i}+\frac{k^{2}}{i^{2}}\bigg)\Bigg)\\
\geq& \lambda_{\alpha}(H_{n})\exp\bigg(-(k-k/\alpha)(1-\e''/2)\ln\frac{n}{N_{0}}-k^{2}\bigg)\\
\geq& (1-2\e')\pi(\mathcal{F})n^{k-k/\alpha}\Big(\frac{n}{N_{0}}\Big)^{-(k-k/\alpha)(1-\e''/2)}\textup{e}^{-k^{2}}\\
\geq&  (1-2\e')\pi(\mathcal{F})n^{(k-k/\alpha)\e''/2}\textup{e}^{-k^{2}}\\
\geq&  (1-2\e')\pi(\mathcal{F})n_{0}^{(k-k/\alpha)\e''/2}\textup{e}^{-k^{2}}\\
\ge  &  N_{0}^{k-k/\alpha},
\end{align*}
where the 3rd and 4th inequalities follow from Facts \ref{fact1} and \ref{fact2}, respectively.  
This yields a contradiction, since the $\alpha$-spectral radius of any  $k$-graph on $N_{0}$ vertices is less than $N_{0}^{k-k/\alpha}$. 

Hence, the removal process must  terminate at $H_{t}$ for some $t>N_{0}$. By the stopping condition, we have 
$\delta(H_{t}) \ge (1-\e)\pi(\mathcal{F})\binom{t}{k-1},$
which implies  $H_{t}\in \mathcal{G}_{t}$, and hence $\lambda_{\alpha}(H_{t})\leq \lambda_{\alpha}(\mathcal{G}_{t})$. 
If $t<n$, by \eqref{temp2} we have $\lambda_{\alpha}(H_{t})>\lambda_{\alpha}(\mathcal{G}_{t})$, yielding a contradiction. 
Therefore $t=n$, and hence $\delta(H)=\delta(H_{n}) \ge (1-\e)\pi(\mathcal{F})\binom{n}{k-1}$, which implies that $H\in \mathcal{G}_{n}$.
\end{proof}

\section*{Acknowledgments}   We are grateful to Dhruv Mubayi and Xizhi Liu  for helpful comments.

\end{document}